\numberwithin{subsubsection}{section}
\numberwithin{equation}{section}
\theoremstyle{plain}
\newtheorem{mainthm}{Theorem}
\newtheorem*{maincor}{Corollary}
\newtheorem{lemm}[subsubsection]{Lemma}
\newtheorem{prop}[subsubsection]{Proposition}
\newtheorem{fact}[subsubsection]{Fact}
\DeclareMathOperator{\A}{\mathcal{A}}
\DeclareMathOperator{\C}{\mathcal{C}}
\DeclareMathOperator{\M}{\mathcal{M}}
\DeclareMathOperator{\Op}{\mathcal{O}}
\DeclareMathOperator{\haut}{\mathrm{haut}}
\DeclareMathOperator{\Map}{\mathrm{Map}}
\DeclareMathOperator{\Mor}{\mathrm{Mor}}
\DeclareMathOperator{\Hom}{\mathrm{Hom}}
\DeclareMathOperator{\Ho}{\mathrm{Ho}}
\DeclareMathOperator{\sk}{\mathit{sk}}
\DeclareMathOperator*{\colim}{\mathrm{colim}}
\DeclareMathOperator{\NN}{\mathbb{N}}
\DeclareMathOperator{\ZZ}{\mathbb{Z}}
\DeclareMathOperator{\kk}{\Bbbk}
\DeclareMathOperator{\QQ}{\mathbb{Q}}
\DeclareMathOperator{\AOp}{\mathsf{A}}
\DeclareMathOperator{\BOp}{\mathsf{B}}
\DeclareMathOperator{\COp}{\mathsf{C}}
\DeclareMathOperator{\DOp}{\mathsf{D}}
\DeclareMathOperator{\EOp}{\mathsf{E}}
\DeclareMathOperator{\FOp}{\mathsf{F}}
\DeclareMathOperator{\GOp}{\mathsf{G}}
\DeclareMathOperator{\HOp}{\mathsf{H}}
\DeclareMathOperator{\IOp}{\mathsf{I}}
\DeclareMathOperator{\LOp}{\mathsf{L}}
\DeclareMathOperator{\POp}{\mathsf{P}}
\DeclareMathOperator{\QOp}{\mathsf{Q}}
\title[On mapping spaces of differential graded operads]{On mapping spaces\\of differential graded operads\\with the commutative operad as target}
\author{Benoit Fresse}
\date{15 September 2009}
\address{UMR 8524 du CNRS et de l'Universit\'e de Lille 1 - Sciences et Technologies\\
Cit\'e Scientifique -- B\^atiment M2\\
F-59655 Villeneuve d'Ascq C\'edex (France)}
\email{Benoit.Fresse@math.univ-lille1.fr}
\urladdr{http://math.univ-lille1.fr/\~{ }fresse}
\subjclass{Primary: 55P48; Secondary: 18G55, 18G30}
\thanks{Research supported in part by grant ANR-06-JCJC-0042 ``OBTH''}
\begin{document}

\begin{abstract}
The category of differential graded operads is a cofibrantly generated model category
and as such inherits simplicial mapping spaces.
The vertices of an operad mapping space are just operad morphisms.
The $1$-simplices represent homotopies between morphisms in the category of operads.

The goal of this paper is to determine the homotopy of the operadic mapping spaces $\Map_{\Op_0}(\EOp_n,\COp)$
with a cofibrant $E_n$-operad $\EOp_n$ on the source
and the commutative operad $\COp$ on the target.
First,
we prove that the homotopy class of a morphism $\phi: \EOp_n\rightarrow\COp$
is uniquely determined by a multiplicative constant
which gives the action of $\phi$ on generating operations
in homology.
From this result,
we deduce that the connected components of $\Map_{\Op_0}(\EOp_n,\COp)$
are in bijection with the ground ring.
Then
we prove that each of these connected components is contractible.

In the case $n = \infty$,
we deduce from our results that the space of homotopy self-equivalences of an $E_\infty$-operad
in differential graded modules has contractible connected components
indexed by the invertible elements of the ground ring.
\end{abstract}

\maketitle

\section*{Introduction}

Recall that any model category $\A$ inherits simplicial mapping spaces $\Map_{\A}(A,X)$
such that $\pi_0(\Map_{\A}(A,X))$ is identified with the morphism set $[A,X]_{\Ho\A}$
of the homotopy category of~$\A$ (see~\cite{DwyerKanFunction}).

The purpose of this paper is to study mapping spaces of operads
in dg-modules,
for any fixed ground ring $\kk$ (for short, we use the prefix dg to mean differential graded).
To be precise,
we deal with the category of non-unitary operads,
the operads $\POp$ such that $\POp(0) = 0$.
This category, denoted by $\Op_0$,
inherits a full model structure from the base category of dg-modules
(see~\cite{BergerMoerdijk,HinichHomotopy}).

\medskip
In the context of simplicial sets
and topological spaces,
the operad of commutative monoids is defined termwise by the terminal object of the category.
Accordingly,
any mapping space with this operad as target is automatically
contractible.
This is no more the case of the commutative operad
in dg-modules $\COp$
since we have then $\COp(r) = \kk\not=0$, for each arity $r>0$.
Nevertheless
the next results, which give the main objectives of the paper, show that the commutative operad still satisfies strong rigidity properties
in the dg-setting:

\begin{mainthm}\label{Result:MappingSpaces}
Let $\POp_n$ be a cofibrant $E_n$-operad ($n = 1,2,\dots,\infty$).
We have
\begin{equation*}
\pi_0(\Map_{\Op_0}(\POp_n,\COp)) = \kk
\quad\text{and}
\quad\pi_i(\Map_{\Op_0}(\POp_n,\COp),\phi) = *\quad\text{when $i>0$},
\end{equation*}
for every choice of base point $\phi\in\Map_{\Op_0}(\POp_n,\COp)_0$.
\end{mainthm}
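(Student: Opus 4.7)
The plan is to reduce Theorem~\ref{Result:MappingSpaces} to a vanishing statement for a derivation (tangent) complex of $\phi$, using cofibrancy of $\POp_n$ and a cell filtration.

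First I would construct a well-defined invariant
\[
\lambda : \pi_0\bigl(\Map_{\Op_0}(\POp_n,\COp)\bigr) \longrightarrow \kk
\]
by sending $\phi$ to the scalar by which $H_0(\phi)$ acts on the class of an arity-$2$ generator of $H_0(\POp_n)$; this is well-defined since $\COp(2) = \kk$. For surjectivity, combine the canonical augmentation $\epsilon : \POp_n \to \COp$ with the one-parameter family of operad endomorphisms $\rho_\mu : \COp \to \COp$ rescaling the arity-$r$ generator by $\mu^{r-1}$ (an arity check against operadic composition shows that each $\rho_\mu$ is indeed an operad morphism). This produces a set-theoretic surjection $\kk \twoheadrightarrow \pi_0$ of which $\lambda$ is a retraction; injectivity will follow from the same complex that controls the higher homotopy groups.

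Now fix a basepoint $\phi$. Cofibrancy of $\POp_n$ supplies a simplicial framing, and the higher homotopy of the mapping space at $\phi$ is then computed by a $\phi$-twisted operadic derivation complex $D_\phi := \mathrm{Der}_\phi(\POp_n,\COp)$, in the shape
\[
\pi_i\bigl(\Map_{\Op_0}(\POp_n,\COp),\phi\bigr) \;\cong\; H_i(D_\phi) \qquad (i \geq 1),
\]
while injectivity of $\lambda$ on $\pi_0$ reduces to a matching rigidity statement in degree zero of $D_\phi$. To establish this I would exploit the cell filtration of $\POp_n$: if $M$ denotes the $\Sigma_\ast$-module of cellular generators, then the associated graded of $D_\phi$ in each filtration degree takes the form $\Hom_{\Sigma_r}(M(r),\COp(r)) = \Hom_{\Sigma_r}(M(r),\kk)$, and the resulting cellular spectral sequence abuts to a computation in terms of $\Sigma_r$-invariants in the dual of $H_\ast(\POp_n)(r)$ --- a well-understood object for $E_n$-operads (Gerstenhaber/$n$-Poisson for $n\geq 2$, associative for $n=1$), whose $\Sigma_r$-invariants with trivial $\kk$-coefficients are concentrated in a single degree.

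The most delicate step I expect is the derivation-complex description of $\pi_\ast$ itself, i.e.\ producing an explicit framing of $\COp$ compatible with operadic composition and $\Sigma_\ast$-equivariance so that the homotopy of $\Map_{\Op_0}(\POp_n,\COp)$ is identified with $H_\ast(D_\phi)$ on the nose. Once this framing is in place and the differentials of the cell-filtration spectral sequence are controlled sharply enough to read off both the positive-degree vanishing (contractibility of components) and the degree-zero rigidity (injectivity of $\lambda$), Theorem~\ref{Result:MappingSpaces} assembles from: the surjectivity of $\lambda$ built above, the injectivity from degree $0$, and the higher vanishing. The case $n=\infty$ is automatic since the argument uses only cofibrancy and the homological shape of an $E_n$-operad.
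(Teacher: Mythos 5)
Your architecture (rescaling morphisms to realize every scalar in $\pi_0$, a cell filtration whose graded pieces are $\Hom_{\Sigma_r}(M(r),\kk)$, and a spectral sequence) matches the paper's in outline, but the step that actually makes the theorem true is misidentified, and this is a genuine gap. The $E_1$-term of the filtration spectral sequence is $H_*(\Hom_{\Sigma_s}(M(s),\kk))$ for the $\Sigma_*$-module $M$ of cellular \emph{generators}, not anything built from $H_*(\POp_n)$; for an arbitrary cofibrant $E_n$-operad these generators are uncontrolled and there is no reason for collapse. The paper therefore runs the whole argument on one specific small model, the cobar construction $\BOp^c(\Lambda^{-n}\EOp_n^{\vee})$ on the desuspended dual of a finitely generated $E_n$-suboperad of the Barratt--Eccles operad, for which $M(s)=\kk[-1]\otimes\Lambda^{-n}\EOp_n(s)^{\vee}$ and hence $E_1^{st}=H_{t-s+n(s-1)-1}(\EOp_n(s)_{\Sigma_s})$, the passage from invariants of the dual to coinvariants using that $\EOp_n(s)$ is a finitely generated \emph{free} $\kk[\Sigma_s]$-complex. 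Your description of the relevant groups as ``$\Sigma_r$-invariants in the dual of $H_*(\POp_n)(r)$\,\dots\ concentrated in a single degree'' is wrong on both counts: $H_*(\EOp_n(s)_{\Sigma_s})$ is the homology of the homotopy quotient (unordered configuration spaces), which in positive characteristic differs from any invariants or coinvariants of $H_*(\EOp_n(s))$, and it is spread over many degrees. The mechanism that forces degeneration is Cohen's vanishing $H_d(\EOp_n(s)_{\Sigma_s})=0$ for $d>(n-1)(s-1)$, which after the $n$-fold operadic desuspension and the bar shift yields $t-s\leq 2-s$, leaving the single entry $E_1^{22}=\kk$. Without this specific model and this numerical cancellation your spectral sequence does not collapse, and the general case must then be recovered from homotopy invariance of mapping spaces.

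Two further points. The identification $\pi_i(\Map_{\Op_0}(\POp_n,\COp),\phi)\cong H_i(D_\phi)$ with a derivation complex, which you flag as the delicate step, is exactly where the fringed, non-abelian nature of the tower bites ($\pi_0$ of the fibers is not a group and the edge of the extended homotopy spectral sequence does not converge formally); the paper never asserts such an isomorphism and instead applies the Bousfield--Kan connectivity lemma directly to the tower of fibrations over the stage-$2$ mapping space, after checking separately, via the rescalings $\rho_c$, that every vertex of that stage lifts to an honest morphism $\BOp^c(\DOp_n)\to\COp$ (this lifting is also what gives injectivity of your $\lambda$, which you leave to the unproved degree-zero rigidity). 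Finally, the case $n=\infty$ is not ``automatic'': the vanishing line disappears as $n\to\infty$ (the groups $H_*(\EOp_\infty(s)_{\Sigma_s})=H_*(B\Sigma_s;\kk)$ are unbounded), so the direct spectral-sequence argument fails there; the paper instead writes $\POp_\infty$ as a colimit of the $\BOp^c(\DOp_n)$ along operad cofibrations and passes to the limit of the mapping spaces, using that the connecting maps induce bijections on $\pi_0$.
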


In the context of dg-modules,
an $E_n$-operad refers to an operad weakly-equivalent to the chain operad of Boardman-Vogt little $n$-cubes for $n<\infty$,
to the commutative operad for $n = \infty$.
In Theorem~\ref{Result:MappingSpaces},
we use tacitely a non-unitary version of the notion of an $E_n$-operad
for which the term of arity zero
is set to be $0$.
This convention, contrary to the usual definition,
is used throughout the article.

The commutative operad $\COp$
is generated as an operad by an operation $\mu\in\COp(2)$
which represents the structure product of commutative algebras.
The convention~$\COp(0) = 0$
implies the existence of operad morphisms $\rho_{c}: \COp\rightarrow\COp$,
naturally associated to all $c\in\kk$,
such that $\rho_{c}(\mu) = c\cdot\mu$.
The identity $\pi_0(\Map_{\Op_0}(\POp_n,\COp)) = \kk$
of Theorem~\ref{Result:MappingSpaces}
comes from the possibility of composing a base point $\phi\in\Map_{\Op_0}(\POp_n,\COp)_0$
with these rescaling morphisms $\rho_{c}: \COp\rightarrow\COp$, for $c\in\kk$.

\medskip
The space of self-maps $\Map_{\A}(A,A)$ of a cofibrant-fibrant object $A$
in a cofibrantly generated model category $\A$
forms a simplicial monoid.
The simplicial set $\haut_{\A}(A)$
formed by the connected components of $\Map_{\A}(A,A)$
which are invertible in $\pi_0(\Map_{\A}(A,A))$
defines the space of homotopy automorphisms of~$A$ (see~\cite{DwyerKanLocalization}).
The connected components of~$\haut_{\A}(A)$
are all weakly-equivalent (by vertex multiplication).

In the case $n=\infty$,
Theorem~\ref{Result:MappingSpaces}
gives as an easy corollary:

\begin{maincor}
Let $\POp_\infty$ be a cofibrant $E_\infty$-operad.
We have:
\begin{equation*}
\pi_0(\haut_{\Op_0}(\POp_{\infty})) = \kk^{\times}\quad\text{and}\quad\pi_i(\haut_{\Op_0}(\POp_{\infty})) = *\quad\text{in degree $i>0$}.
\end{equation*}
\end{maincor}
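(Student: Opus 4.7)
The strategy is to deduce the corollary from Theorem~\ref{Result:MappingSpaces} by transferring the computation of $\Map_{\Op_0}(\POp_\infty,\COp)$ to the self-mapping space $\Map_{\Op_0}(\POp_\infty,\POp_\infty)$. Since $\POp_\infty$ is a cofibrant $E_\infty$-operad, hence weakly-equivalent to $\COp$, and since $\COp$ is fibrant in the dg model structure on $\Op_0$, one can fix a direct operad weak equivalence $\epsilon: \POp_\infty\rightarrow\COp$. Post-composition with $\epsilon$ yields a morphism of simplicial sets
\begin{equation*}
\epsilon_*: \Map_{\Op_0}(\POp_\infty,\POp_\infty)\rightarrow\Map_{\Op_0}(\POp_\infty,\COp),
\end{equation*}
and a standard Ken-Brown-style argument for Dwyer-Kan mapping spaces in a cofibrantly generated model category (applied with $\POp_\infty$ cofibrant and both the source and target of $\epsilon$ fibrant) shows that $\epsilon_*$ is a weak equivalence of simplicial sets. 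Theorem~\ref{Result:MappingSpaces} then transfers to the assertion that every connected component of $\Map_{\Op_0}(\POp_\infty,\POp_\infty)$ is contractible and that $\pi_0(\Map_{\Op_0}(\POp_\infty,\POp_\infty))$ is in bijection with $\kk$. This immediately yields $\pi_i(\haut_{\Op_0}(\POp_\infty),\phi) = *$ for all $i>0$ and all base points $\phi$.

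To finish, I would verify that the bijection $\pi_0(\Map_{\Op_0}(\POp_\infty,\POp_\infty))\cong\kk$ is an isomorphism of monoids, when the left-hand side carries the product induced by composition of self-maps and the right-hand side carries multiplication. By construction of the bijection via $\epsilon_*$, the scalar $c_\phi$ attached to a self-map $\phi: \POp_\infty\rightarrow\POp_\infty$ is characterized by $H_*(\epsilon\circ\phi)(\mu) = c_\phi\cdot\mu$. Since $H_*(\epsilon)$ is the canonical isomorphism sending the arity-two generator of $H_*(\POp_\infty)$ onto $\mu$, this scalar agrees with the one governing the action of $H_*(\phi)$ on that generator, and functoriality $H_*(\phi_1\circ\phi_2) = H_*(\phi_1)\circ H_*(\phi_2)$ gives $c_{\phi_1\circ\phi_2} = c_{\phi_1}\cdot c_{\phi_2}$. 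The invertible elements of the resulting monoid are exactly $\kk^{\times}$, so the definition of $\haut$ gives $\pi_0(\haut_{\Op_0}(\POp_\infty)) = \kk^{\times}$.

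The step that deserves most care, rather than constituting a serious obstacle, is the first one: one needs both that $\COp$ is fibrant in $\Op_0$ (so that a genuine weak equivalence $\epsilon: \POp_\infty\rightarrow\COp$ exists, not merely a zigzag) and that the induced $\epsilon_*$ is a weak equivalence of simplicial mapping spaces in the Dwyer-Kan sense. Both are standard features of the model structure on dg-operads recalled in the introduction, after which the rest of the corollary is a direct reading of Theorem~\ref{Result:MappingSpaces}.
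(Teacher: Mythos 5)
Your proposal is correct and follows essentially the same route as the paper: the paper likewise uses the augmentation $\POp_{\infty}\xrightarrow{\sim}\COp$ to induce a weak-equivalence $\Map_{\Op_0}(\POp_{\infty},\POp_{\infty})\xrightarrow{\sim}\Map_{\Op_0}(\POp_{\infty},\COp)$, transfers Theorem~\ref{Result:MappingSpaces}, and identifies composition on $\pi_0$ with scalar multiplication in $\kk$ via the action on the generating homology operation (the ``detailed analysis of the concluding section''), so that the invertible components are exactly $\kk^{\times}$.
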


To obtain this corollary,
we use simply that the augmentation of a cofibrant $E_\infty$-operad $\POp_{\infty}\xrightarrow{\sim}\COp$
induces a weak-equivalence of mapping spaces
\begin{equation*}
\Map_{\Op_0}(\POp_{\infty},\POp_{\infty})\xrightarrow{\sim}\Map_{\Op_0}(\POp_{\infty},\COp)
\end{equation*}
and, under the identity $\pi_0(\Map_{\Op_0}(\POp_{\infty},\POp_{\infty})) = \pi_0(\Map_{\Op_0}(\POp_{\infty},\COp)) = \kk$,
the multiplication of connected components in $\Map_{\Op_0}(\POp_{\infty},\POp_{\infty})$
corresponds to scalar multiplications in $\kk$ (use the detailed analysis of the concluding section of the paper).

The homotopy automorphism groups $\pi_*(\haut_{\Op_0}(\POp_n))$
seem more intricate for $n<\infty$.
Some hints come from the case $n=2$:
the classifying spaces of pure braids define the underlying collection of an $E_2$-operad;
the Grotendieck-Teichm\"uller group,
whose elements realize universal automorphisms of braided monoidal categories,
acts on these classifying spaces by operad morphisms
and one conjectures (see~\cite{KontsevichMotives})
that this action defines an embedding of the Grotendieck-Teichm\"uller group
into $\pi_0(\haut_{\Op_0}(\POp_2))$
(see~\cite{Tamarkin} for a result in this direction in the characteristic zero setting).

\medskip
The result of Theorem~\ref{Result:MappingSpaces} and its corollary also hold in the simplicial setting
because the normalization functor from simplicial modules to dg-modules
induces the right-adjoint of a Quillen equivalences between simplicial operads and dg-operads
(adapt the line of argument of~\cite[Proposition I.4.4 and Theorem II.5.4]{Quillen}).
The definition of mapping spaces is easier in the simplicial context,
but the crux of the proof of Theorem~\ref{Result:MappingSpaces}
relies on constructions of the dg-context.

In a sense,
this paper represents a first application of results of~\cite{FresseEnKoszulDuality}
because the proof of Theorem~\ref{Result:MappingSpaces}
is based on a certain cofibrant model of~$E_n$-operads
defined in that article.

This alluded to cofibrant model has the form of an operadic cobar construction $\POp_n = \BOp^c(\DOp_n)$,
where $\DOp_n = \Lambda^{-n}\EOp_n^{\vee}$ is the operadic desuspension
of the dual cooperad of an $E_n$-operad $\EOp_n$
satisfying mild requirements.
We use a natural filtration of the operadic cobar construction $\BOp^c(\Lambda^{-n}\EOp_n^{\vee})$
to produce a decomposition of the mapping space $\Map_{\Op_0}(\BOp^c(\Lambda^{-n}\EOp_n^{\vee}),\COp)$
into a tower of fibrations with Eilenberg-Mac Lane spaces as fibers.
To obtain the result of Theorem~\ref{Result:MappingSpaces},
we just observe that the extended homotopy spectral sequence of this tower fibrations practically vanishes
at $E_1$-stage.

The alluded to filtration of the operadic cobar construction $\BOp^c(\Lambda^{-n}\EOp_n^{\vee})$
is deduced from a filtration of quasi-free operads by arity of generators.
The definition of this filtration is reviewed in~\S\ref{HomotopySpectralSequence}.
The decomposition of mapping spaces arising from such a filtration
is defined in the same section~(\S\ref{HomotopySpectralSequence})
and Theorem~\ref{Result:MappingSpaces} is established afterwards (in~\S\ref{HomotopyGroups}).

In the concluding section,
we study applications of Theorem~\ref{Result:MappingSpaces}
to the definition of operad mophisms $\phi: \BOp^c(\Lambda^{-n}\EOp_n^{\vee})\rightarrow\COp$
and $\phi^{\sharp}: \Lambda^{n-1}\LOp_{\infty}\rightarrow\EOp_n$,
where $\LOp_{\infty}$ is a model of an $L_{\infty}$-operad (an operad equivalent to the operad of Lie algebras).
In brief,
we prove that such morphisms are characterized, within the homotopy category of operads,
by their effect in homology.
In characteristic zero,
the existence of morphisms of the form $\phi^{\sharp}: \Lambda^{n-1}\LOp_{\infty}\rightarrow\EOp_n$
has been used for associating a deformation complex to $E_n$-algebra structures
arising from solutions of the Deligne conjecture (see~\cite{KontsevichMotives} for a comprehensive account of these ideas).

Before beginning,
we review some main conventions used throughout the article.

\section*{Conventions and background}

In the sequel,
we adopt conventions and notation of the papers~\cite{FresseCylinder,FresseEnKoszulDuality}
which give the operadic background of this work.
In this section,
we just review some overall conventions on dg-modules and operads.

\medskip
Throughout this paper, a dg-module refers to a lower $\ZZ$-graded module $C$, over a fixed ground ring $\kk$,
together with a differential $\delta: C\rightarrow C$
that decreases degrees by $1$.
The category of dg-modules, denoted by $\C$,
is equipped with its usual tensor product $\otimes: \C\times\C\rightarrow\C$
together with the symmetry isomorphism $\tau: C\otimes D\rightarrow D\otimes C$
involving a sign.
The notation $\pm$ is used to represent any sign which arises from an application of this symmetry isomorphism.

The morphism sets of any category $\A$ are denoted by $\Mor_{\A}(A,X)$.
The internal hom-objects of the category of dg-modules are denoted by $\Hom_{\C}(C,D)$.
Recall that a homogeneous element of $\Hom_{\C}(C,D)$
is just a morphism of $\kk$-modules $f: C\rightarrow D$
that increases degrees by $d = \deg(f)$.
The differential of $f$ in $\Hom_{\C}(C,D)$
is defined by the graded commutator of $f$ with the internal differentials of $C$ and $D$.
The elements of the dg-hom $\Hom_{\C}(C,D)$
are called homomorphisms to be distinguished from the actual morphisms of dg-modules $f\in\Mor_{\C}(C,D)$.

The category of dg-modules $\C$
is equipped with its standard model structure
for which the weak-equivalences are the morphisms which induce an isomorphism in homology,
the fibrations are the degreewise surjections (see~\cite[\S 2.3]{Hovey}).

\medskip
As explained in the introduction, we use the notation $\Op_0$ to refer to the category of non-unitary operads,
the operads $\POp$ such that $\POp(0) = 0$.
The unit operad,
which defines the initial object of~$\Op_0$,
is denoted by~$\IOp$.
The category $\Op_0$ inherits a model structure such that a morphism $\phi: \POp\rightarrow\QOp$
is a weak-equivalence (respectively, fibration)
if its components $\phi: \POp(r)\rightarrow\QOp(r)$, $r\in\NN$,
are weak-equivalences (respectively, fibrations)
in the category of dg-modules
(detailed recollections and comprehensive bibliographical references on this background can be found in~\cite[\S 1.3]{FresseCylinder}).
The cofibrations are characterized by the right-lifting-property with respect to acyclic fibrations.

\section[The extended homotopy spectral sequence of operadic mapping spaces]{Quasi-free operads\\
and the extended homotopy spectral sequence\\
of operadic mapping spaces}\label{HomotopySpectralSequence}

The simplicial mapping spaces $\Map_{\A}(A,X)$ in a cofibrantly generated model category $\A$
are defined by morphism sets $\Mor_{\A}(A\otimes\Delta^n,X)$
where $A\otimes\Delta^{\bullet}$
is a cosimplicial objet associated to $A$, a cosimplicial frame of $A$,
so that:
\begin{enumerate}
\item\label{FrameVertices}
we have an identity $A\otimes\Delta^{0} = A$;
\item\label{FrameCofibration}
the morphisms $\eta_i: A\otimes\Delta^{0}\rightarrow A\otimes\Delta^{n}$
induced by the embeddings $\eta_i: \{i\}\rightarrow\{0<\dots<n\}$
in the simplicial category $\Delta$
assemble to a Reedy cofibration $\ell^{\bullet} A\rightarrowtail A\otimes\Delta^{\bullet}$,
where~$\ell^{\bullet} A$ is a cosimplicial object such that
$\ell^n A = \amalg_{i=0}^{n} A$;
\item\label{FrameHomotopy}
the morphism $\epsilon: A\otimes\Delta^{n}\rightarrow A\otimes\Delta^{0}$
induced by the constant map $\eta_i: \{0<\dots<n\}\rightarrow\{0\}$
is a weak-equivalence in $\A$.
\end{enumerate}
(We refer to~\cite[\S 1, \S 5]{Hovey} for full details on this definition and its applications.)
Requirements (\ref{FrameVertices}-\ref{FrameHomotopy})
ensure that the simplicial set $\Map_{\A}(A,X)$
satisfies reasonable homotopy invariance properties
when we restrict ourself to cofibrant objects on the source and fibrant objects on the target.
The first requirement (\ref{FrameVertices})
gives an identity between the vertices of the mapping space $\phi\in\Map_{\A}(A,X)_0$
and the morphisms of the category $\phi\in\Mor_{\A}(A,X)$.
The $1$-simplices $\psi\in\Map_{\A}(A,X)_1$
can also be identified with left-homotopies between morphisms in $\A$
because the assumptions imply that $A\otimes\Delta^{1}$
forms a cylinder-object associated to $A$.

In the category of dg-modules $\A = \C$,
we have a natural cosimplicial framing, associated to each cofibrant object $C\in\C$,
defined by the tensor products $C\otimes N_*(\Delta^n)$,
where $N_*(\Delta^n)$ is the normalized chain complex of the $n$-simplex $\Delta^n$.
In this setting,
the mapping space $\Map_{\C}(C\otimes N_*(\Delta^{\bullet}),D)$
forms naturally a simplicial $\kk$-module and the normalized chain complex of this simplicial $\kk$-module
can formally be identified
with the dg-hom of the category of dg-modules $\Hom_{\C}(C,D)$.

\medskip
There is a dual definition of simplicial mapping spaces $\Map_{\A}(A,X)$
in terms of morphism sets $\Mor_{\A}(A,X^{\Delta^n})$
associated to simplicial frames $X^{\Delta^n}$
satisfying the dual of the requirements (\ref{FrameVertices}-\ref{FrameHomotopy})
of cosimplicial frames.
These dual definitions produce weakly-equivalent simplicial mapping spaces
provided that we restrict ourself to cofibrant objects on the source
and fibrant objects on the target.

In the context of operads $\A = \Op_0$,
we apply this dual definition because the functoriality of mapping spaces on the source
is easier to handle when we take a simplicial frame on the target rather than a cosimplicial frame on the source -- indeed, the morphism
$f^*: \Map_{\A}(B,X)\rightarrow\Map_{\A}(A,X)$ induced by $f: A\rightarrow B$
is just given by the composition with $f$
in the morphism sets $\Mor_{\A}(-,X^{\Delta^n})$.
The cofibrant objects that we consider are structures, called quasi-free operads,
defined by the addition of a twisting derivation $\partial: \FOp(M)\rightarrow\FOp(M)$
to the natural differential of a free operad $\FOp(M)$
so that we have a new operad in dg-modules $\POp = (\FOp(M),\partial)$
with the same underlying graded object
as the free operad $\FOp(M)$.
We observe that a cofibrant operad $\POp = (\FOp(M),\partial)$
inherits a natural filtration by arity of generators (under mild assumptions on the twisting homomorphism).
The goal of this section is to study the decomposition of operadic mapping spaces
arising from such natural filtrations on the source.

First of all,
we review the definition of a quasi-free operad
in detail.

\subsubsection{The definition of twisted operads}\label{HomotopySpectralSequence:TwistedOperads}
We borrow the formalism of~\cite[\S 1.4]{FresseCylinder}
for the definition of twisted objects
in the category of operads.

Recall briefly that a collection of homomorphisms $\partial: \POp(n)\rightarrow\POp(n)$
defines an operad derivation $\partial: \POp\rightarrow\POp$
if each $\partial$ commutes with the action of permutations on $\POp(n)$
and we have the derivation relation
\begin{equation}\label{DerivationRelation}
\partial(p\circ_i q) = \partial(p)\circ_i q + \pm p\circ_i\partial(q)
\end{equation}
with respect to the operad composition structure $\circ_i: \POp(m)\otimes\POp(n)\rightarrow\POp(m+n-1)$,
where the sign $\pm$ arises from the standard conventions of differential graded algebra.
The derivation relation implies that $\partial$
cancels the operad unit $1\in\POp(1)$.

A twisting derivation $\partial: \POp\rightarrow\POp$ is an operad derivation of degree $-1$
whose components $\partial: \POp(n)\rightarrow\POp(n)$
satisfy the equation
\begin{equation}\label{TwistingEquation}
\delta(\partial) + \partial^2 = 0
\end{equation}
in $\Hom_{\C}(\POp(n),\POp(n))$, for all $n\in\NN$.

Equation~(\ref{TwistingEquation}) implies that the addition of $\partial: \POp(n)\rightarrow\POp(n)$
to the internal differential $\delta: \POp(n)\rightarrow\POp(n)$
defines a new differential on $\POp(n)$
since we have identities $(\delta+\partial)^2 = \delta^2 + \delta\partial + \partial\delta + \partial^2 = 0 + \delta(\partial) + \partial^2 = 0$.
Hence,
we have a new dg-module associated to each $\POp(n)$ with the same underlying graded module as $\POp(n)$
but the homomorphism $\delta+\partial: \POp(n)\rightarrow\POp(n)$
as differential.
Usually,
we just use the notation of the pair~$(\POp(n),\partial)$ to refer to this twisted dg-module associated to $\POp(n)$.

The derivation relation~(\ref{DerivationRelation})
implies that the composition products of the operad $\POp$
define morphisms of dg-modules between the twisted objects $(\POp(n),\partial)$.
Hence,
the collection of twisted dg-modules $(\POp(n),\partial)$ inherits an operad composition structure when $\partial$ is an operad twisting derivation
so that we have a new operad in dg-modules $(\POp,\partial)$
with the same underlying graded object as $\POp$.

\subsubsection{Recollections on quasi-free operads}\label{HomotopySpectralSequence:QuasiFreeOperads}
A quasi-free operad is a twisted operad $\QOp = (\FOp(M),\partial)$
associated to a free operad $\POp = \FOp(M)$.

The free operad $\FOp(M)$
is defined by the left-adjoint of the obvious forgetful functor $U: \Op\rightarrow\M$
from the category of operads $\Op$
to the category $\M$
formed by collections $M(n)$, $n\in\NN$,
where $M(n)$
is a dg-module equipped with an action of the symmetric group in $n$-letters $\Sigma_n$.
In the sequel,
we use the terminology of $\Sigma_*$-object to refer the objects of this category $\M$.
Recall that the category of $\Sigma_*$-objects
inherits dg-modules of homomorphisms $\Hom_{\M}(M,N)$:
a homomorphism $f\in\Hom_{\M}(M,N)$
is simply a collection of homomorphisms of dg-modules $f\in\Hom_{\C}(M(n),N(n))$
commuting with the action of symmetric groups;
the differential of $\Hom_{\M}(M,N)$ is defined componentwise by the differential of dg-module homomorphisms.

Intuitively,
the free operad $\FOp(M)$ is defined by the collection of dg-modules $\FOp(M)(n)$
spanned by formal operadic composites of generating elements $\xi_i\in M(n_i)$.
In this representation,
we identify the generating $\Sigma_*$-object $M$
with a subobject of the free operad $\FOp(M)$.

In the case of a free operad $\POp = \FOp(M)$,
the derivation relation~(\ref{DerivationRelation}) of~\S\ref{HomotopySpectralSequence:TwistedOperads}
implies that any operad derivation $\partial: \FOp(M)\rightarrow\FOp(M)$
is uniquely determined by a homomorphism $\theta: M\rightarrow M$
so that $\theta = \partial|_{M}$.
In the sequel,
we adopt the notation $\partial = \partial_{\theta}$
for the derivation associated to $\theta: M\rightarrow M$.
In~\cite[Proposition 1.4.5]{FresseCylinder},
we observe that equation~(\ref{TwistingEquation}) of~\S\ref{HomotopySpectralSequence:TwistedOperads}
holds if and only if we have the equation
\begin{equation}\label{FreeTwistingEquation}
\delta(\theta) + \partial_{\theta}\cdot\theta = 0
\end{equation}
in $\Hom_{\M}(M,\FOp(M))$.

The adjunction relation $\FOp: \M\rightleftarrows\Op :U$
asserts that a morphism $\phi: \FOp(M)\rightarrow\QOp$
towards an operad $\QOp$
is uniquely determined by a morphism of $\Sigma_*$-objects $f: \M\rightarrow\QOp$
so that $f = \phi|_{M}$.
In our intuitive definition of the free operad,
we simply use the commutation relation $\phi(p\circ_i q) = \phi(p)\circ_i\phi(q)$
to determine the map $\phi$ on the formal operadic composites of~$\FOp(M)$
from its restriction $f = \phi|_{M}$.
In the case of a quasi-free operad $\POp = (\FOp(M),\partial_{\theta})$,
the obtained morphism $\phi = \phi_f$
does not necessarily preserve the differential of the quasi-free object $\POp = (\FOp(M),\partial_{\theta})$.
Therefore
we extend the construction of $\phi = \phi_f$
to homomorphisms $f\in\Hom_{\M}(M,\QOp)$.
In this setting,
we have a homomorphism $\phi_f: \FOp(M)\rightarrow\QOp$,
preserving grading, symmetric group action and composition structure,
naturally associated to each homomorphism $f: M\rightarrow\QOp$
of degree $0$.
In~\cite[Proposition 1.4.7]{FresseCylinder},
we note that this homomorphism $\phi_f$
defines a genuine morphism on the quasi-free operad $\POp = (\FOp(M),\partial_{\theta})$
if and only if we have the relation
\begin{equation}\label{QuasiFreeMorphism}
\delta(f) - \phi_f\cdot\theta = 0
\end{equation}
in $\Hom_{\M}(M,\QOp)$.

\subsubsection{The filtration of quasi-free operads by arity of generators}\label{HomotopySpectralSequence:QuasiFreeFiltration}
In~\S\ref{HomotopySpectralSequence:QuasiFreeOperads},
we explain that the generating object of a free operad $\FOp(M)$
is naturally embedded in $\FOp(M)$.
In fact,
the free operad $\FOp(M)$ has a natural splitting in the category of $\Sigma_*$-objects $\FOp(M) = \bigoplus_{r=0}^{\infty} \FOp_r(M)$
such that $\FOp_0(M) = \IOp$
and $\FOp_1(M) = M$.
Intuitively,
the $\Sigma_*$-object $\FOp_r(M)$
is the submodule of $\FOp(M)$
spanned by $r$-fold composites of generating elements $\xi_i\in M(n_i)$.

In general,
we assume that the homomorphism $\theta: M\rightarrow\FOp(M)$
which determines the twisting derivation of a quasi-free operad $\POp = (\FOp(M),\partial_{\theta})$
satisfies $\partial_{\theta}(M)\subset\bigoplus_{r\geq 2} \FOp_r(M)$.
From now on,
we also assume that the $\Sigma_*$-object $M$
satisfies $M(0) = M(1) = 0$.
In this situation,
we observe in~\cite[\S\S 1.4.9-1.4.10]{FresseCylinder}
that the arity filtration of~$M$
\begin{equation*}
\sk_s M(n) = \begin{cases} M(n), & \text{if $n\leq s$}, \\ 0, & \text{otherwise}, \end{cases}
\end{equation*}
gives a nested sequence of free operads $\FOp(\sk_s M)$
preserved by the twisting derivation of $\POp = (\FOp(M),\partial)$.
Hence,
we have a nested sequence of quasi-free operads
\begin{equation}\label{QuasiFreeFiltration}
\IOp = \sk_1\POp\subset\dots\subset\sk_s\POp\subset\dots\subset\colim_{s}\sk_s\POp = \POp
\end{equation}
such that $\sk_s\POp = (\FOp(\sk_s M),\partial_{\theta})$, where we take the restriction of the twisting derivation of~$\POp$
to $\FOp(\sk_s M)\subset\FOp(M)$.
Furthermore,
we prove in~\cite[Lemma 1.4.11]{FresseCylinder}
that each embedding $i: \sk_{s-1}\POp\hookrightarrow\sk_s\POp$
is an operad cofibration
if $M$ is cofibrant with respect to a standard model structure on~$\Sigma_*$-objects.

The identity $\IOp = \sk_1\POp$ follows from the assumption $M(0) = M(1) = 0$.
Note that the assumption $M(0) = 0$
also implies that the operad $\FOp(M)$
is non-unitary.

\medskip
We study the mapping space $\Map_{\Op_0}(\POp,\QOp)$ associated to a quasi-free operad~$\POp = (\FOp(M),\partial)$
and a fixed operad $\QOp\in\Op_0$ (which is automatically fibrant because every dg-module is so).
We pick a simplicial framing of~$\QOp$
and we take $\Map_{\Op_0}(\POp,\QOp) = \Mor_{\Op_0}(\POp,\QOp^{\Delta^{\bullet}})$
as definition for a mapping space
targeting to $\QOp$.
We have then:

\begin{prop}\label{HomotopySpectralSequence:MappingSpaceDecomposition}
In the setting of~\ref{HomotopySpectralSequence:QuasiFreeFiltration},
the mapping space $\Map_{\Op_0}(\POp,\QOp)$
associated to a quasi-free operad $\POp = (\FOp(M),\partial)$
is the limit term of a tower of fibrations
\begin{equation*}
\cdots\rightarrow\Map_{\Op_0}(\sk_s\POp,\QOp)\rightarrow\Map_{\Op_0}(\sk_{s-1}\POp,\QOp)\rightarrow\cdots
\rightarrow\Map_{\Op_0}(\sk_1\POp,\QOp) = *
\end{equation*}
with the mapping spaces $\Map_{\Op_0}(\FOp(M(s)),\QOp)$
as fibers, for any choice of morphism $\phi: \POp\rightarrow\QOp$ as base point,
where we identify the dg-module $M(s)$
with a $\Sigma_*$-object $M(s)\subset M$
concentrated in arity $s$.
\end{prop}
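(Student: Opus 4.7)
The plan is to combine three inputs: the cofibration property of the arity filtration $(\sk_s\POp)_s$, the standard fact that $\Map_{\Op_0}(-, \QOp)$ sends cofibrations of cofibrant operads to fibrations of simplicial sets when $\QOp$ is fibrant (which is automatic in the dg-setting), and an explicit pushout-style analysis identifying the fiber over a base point $\phi$ with $\Map_{\Op_0}(\FOp(M(s)), \QOp)$.

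I would first invoke~\cite[Lemma 1.4.11]{FresseCylinder}, recalled in~\S\ref{HomotopySpectralSequence:QuasiFreeFiltration}, so that each embedding $\sk_{s-1}\POp \hookrightarrow \sk_s\POp$ is a cofibration in $\Op_0$. Combined with the choice of a simplicial frame $\QOp^{\Delta^\bullet}$, this ensures that each restriction map $\Mor_{\Op_0}(\sk_s\POp, \QOp^{\Delta^\bullet}) \to \Mor_{\Op_0}(\sk_{s-1}\POp, \QOp^{\Delta^\bullet})$ is a fibration of simplicial sets. The identity $\POp = \colim_s \sk_s\POp$ transforms into $\Map_{\Op_0}(\POp, \QOp) = \lim_s \Map_{\Op_0}(\sk_s\POp, \QOp)$ by the defining adjunction, and $\sk_1\POp = \IOp$ forces the bottom of the tower to reduce to a point.

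The heart of the proof is the fiber identification. I would start from an arity count: any non-trivial $r$-fold composite (with $r \geq 2$) of generators of arity $\geq 2$ has total arity at least $r+1$, so each generator appearing in an element of $\FOp_r(M)(s)$ with $r \geq 2$ has arity at most $s-r+1 \leq s-1$. Together with the assumption $\theta(M) \subset \bigoplus_{r \geq 2}\FOp_r(M)$, this forces $\theta|_{M(s)}$ to land inside $\FOp(\sk_{s-1}M) \subset \FOp(M)$. Using the description of morphisms out of a quasi-free operad from~\S\ref{HomotopySpectralSequence:QuasiFreeOperads}, an $n$-simplex in the fiber over $\phi$ then corresponds to a degree-$0$ $\Sigma_s$-equivariant homomorphism $\sigma: M(s) \to \QOp^{\Delta^n}(s)$ satisfying the affine equation
\begin{equation*}
\delta(\sigma) = \phi_*(\theta|_{M(s)}),
\end{equation*}
in which the right-hand side depends only on the prescribed restriction $\phi|_{\sk_{s-1}\POp}$. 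Translating by the restriction of $\phi$ itself to $M(s)$ identifies this affine space, in each simplicial degree, with the space of closed equivariant maps $M(s) \to \QOp^{\Delta^n}(s)$, which coincides with $\Map_{\Op_0}(\FOp(M(s)), \QOp)_n$ because $\FOp(M(s))$ has $M(s)$ as generators in arity $s$ and no generators in lower arities. The translation is independent of $n$ and hence compatible with faces and degeneracies of the simplicial frame.

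The main obstacle I anticipate is precisely this fiber identification: checking that the arity analysis really does isolate $\theta|_{M(s)}$ inside $\FOp(\sk_{s-1}M)$, so that the defining equation of the fiber decouples into an affine problem over the fixed base point, and confirming that the translation by $\phi$ respects the simplicial structure of the frame. Once these technical points are in hand, the tower of fibrations with the advertised fibers assembles at once.
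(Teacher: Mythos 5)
Your proposal is correct and follows essentially the same route as the paper: cofibrancy of the skeletal inclusions gives the tower of fibrations, the colimit turns into a limit of mapping spaces, and the fiber over $\phi$ is identified with $\Map_{\Op_0}(\FOp(M(s)),\QOp)$ by observing that the morphism equation on $M(s)$ becomes an affine equation whose solutions are translates of $\phi|_{M(s)}$ by closed equivariant maps. The only cosmetic difference is that you establish the key containment $\partial_{\theta}(M(s))\subset\FOp(\sk_{s-1}M)$ by a direct arity count, where the paper cites~\cite[Lemma 1.4.10]{FresseCylinder}.
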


\begin{proof}
The morphisms $i^*: \Map_{\Op_0}(\sk_s\POp,\QOp)\rightarrow\Map_{\Op_0}(\sk_{s-1}\POp,\QOp)$
induced by the embeddings $i: \sk_{s-1}\POp\hookrightarrow\sk_s\POp$
are fibrations of simplicial sets
because, in any model category, a morphism of mapping spaces
induced by a cofibration on the source
is so.
Moreover, we have clearly $\Map_{\Op_0}(\POp,\QOp) = \Map_{\Op_0}(\colim_s\POp,\QOp) = \lim_s\Map_{\Op_0}(\POp,\QOp)$.

Thus,
we just have to determine the fiber of~$i^*$ over the restriction of a given morphism $\phi: \POp\rightarrow\QOp$.
For this purpose
we use the determination of morphisms on quasi-free operads
in terms of homomorphisms of $\Sigma_*$-objects.
We have $\phi = \phi_f$ for some homomorphism $f: M\rightarrow\QOp$.

Let $\phi_g: \sk_s\POp\rightarrow\QOp^{\Delta^n}$
be an operad morphism towards the term $\QOp^{\Delta^n}$
of the simplicial framing of $\QOp$.
This morphism is determined by a homomorphism of $\Sigma_*$-objects
$g: \sk_s M\rightarrow\QOp^{\Delta^n}$.
For a morphism $\phi_g$ in the fiber of $\phi_f$,
the equation $\phi_g|_{\sk_{s-1}\POp} = \phi_f|_{\sk_{s-1}\POp}$
amounts to the relation $g|_{\sk_{s-1} M} = f|_{\sk_{s-1} M}$
in $\Hom_{\M}(\sk_{s-1} M,\QOp^{\Delta^n})$,
where we apply the constant map $\sigma: \{0<\dots<n\}\rightarrow\{0\}$
to identify $f\in\Hom_{\M}(\sk_{s-1} M,\QOp^{\Delta^0})$
with a homomorphism of $\Hom_{\M}(\sk_{s-1} M,\QOp^{\Delta^n})$.
Let $u\in\Hom_{\M}(M(s),\QOp^{\Delta^n})$
be the homomorphism defined by the difference $g - f$
on $M(s)$.

The homomorphism $g$ is obviously fully determined by the relation $g|_{\sk_{s-1} M} = f|_{\sk_{s-1} M}$
on $\sk_{s-1} M$
and the identity $g = f + u$ on $M(s)$.
Observe now that the equation
\begin{equation}\label{QuasiFreeMorphismReminder}
\delta(g) - \phi_g\cdot\theta = 0
\end{equation}
characterizing morphisms $\phi_g: \sk_s\POp\rightarrow\QOp^{\Delta^n}$
holds in $\Hom_{\M}(\sk_s M,\QOp^{\Delta^n})$
if and only if we have $\delta(u) = 0$
in $\Hom_{\M}(M(s),\QOp^{\Delta^n})$,
and hence if and only if $u$ defines a morphism of dg-modules $u: M\rightarrow\QOp^{\Delta^n}$.
Indeed,
the relation $g|_{\sk_{s-1} M} = f|_{\sk_{s-1} M}$
immediately implies that~(\ref{QuasiFreeMorphismReminder})
holds on $\sk_{s-1} M\subset\sk_{s} M$.
By~\cite[Lemma 1.4.10]{FresseCylinder},
the twisting derivation of~$\POp$
also satisfies $\partial_{\theta}(\sk_s M)\subset\FOp(\sk_{s-1} M)$
when the requirements of~\S\ref{QuasiFreeFiltration}
are fulfilled.
Consequently,
on $M(s)\subset\sk_{s} M$,
equation~(\ref{QuasiFreeMorphismReminder})
reduces to
\begin{equation}
(\delta(g) - \phi_g\cdot\theta)|_{M(s)} = \delta(u) + (\delta(f) - \phi_f\cdot\theta)|_{M(s)} = \delta(u)
\end{equation}
and therefore we have the equivalence $\delta(g) - \phi_g\cdot\theta = 0\Leftrightarrow\delta(u) = 0$.

Thus,
we have a bijective correspondence between operad morphisms $\phi_g: \sk_s\POp\rightarrow\QOp^{\Delta^n}$
such that $\phi_g|_{\sk_{s-1}\POp} = \phi_f|_{\sk_{s-1}\POp}$,
and morphisms of dg-modules $u: M(s)\rightarrow\QOp^{\Delta^n}$,
which are also equivalent to morphisms $\phi_u: \FOp(M(s))\rightarrow\QOp^{\Delta^n}$
on the free operad $\FOp(M(s))$.
Note that this correspondence is obviously natural
with respect to the structure morphisms of the simplicial object $\QOp^{\Delta^{\bullet}}$.
Hence,
as claimed in the proposition,
we have an identity between the simplicial set $\Mor_{\Op_0}(\FOp(M(s)),\QOp^{\Delta^{\bullet}})$
and the fiber over $\phi = \phi_f$
of the morphism $i^*: \Mor_{\Op_0}(\sk_s\POp,\QOp^{\Delta^{\bullet}})\rightarrow\Mor_{\Op_0}(\sk_{s-1}\POp,\QOp^{\Delta^{\bullet}})$.
\end{proof}

\begin{prop}\label{HomotopySpectralSequence:MappingSpaceFibers}
For the free operad $\POp = \FOp(M)$
associated to any cofibrant $\Sigma_*$-object $M$,
we have
\begin{equation*}
\pi_*(\Map_{\Op_0}(\FOp(M),\QOp)) = H_*(\Hom_{\M}(M,\QOp)).
\end{equation*}
\end{prop}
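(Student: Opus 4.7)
The plan is to reduce the computation to the corresponding mapping space in the category of $\Sigma_*$-objects, and then apply the Dold-Kan correspondence, much as in the dg-module case recalled at the beginning of the section.

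First, for a free operad $\POp = \FOp(M)$ the twisting derivation is zero, so equation~(\ref{QuasiFreeMorphism}) specializes to the cycle condition $\delta(f) = 0$ and the correspondence $f\mapsto\phi_f$ of~\S\ref{HomotopySpectralSequence:QuasiFreeOperads} reduces to the ordinary free-forgetful adjunction $\FOp: \M\rightleftarrows\Op_0 :U$. For every operad $\QOp$, this gives a natural bijection $\Mor_{\Op_0}(\FOp(M),\QOp) = \Mor_{\M}(M,U\QOp)$. Applying this bijection in each simplicial degree to the simplicial frame $\QOp^{\Delta^\bullet}$ yields an identity of simplicial sets
\begin{equation*}
\Map_{\Op_0}(\FOp(M),\QOp) = \Map_{\M}(M,\QOp),
\end{equation*}
where the right-hand side is the simplicial mapping space in $\M$ computed with the $\Sigma_*$-object frame inherited from $U\QOp^{\Delta^\bullet}$. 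Since the operadic frame axioms are tested arity-wise in dg-modules, in each arity $r$ this restricts to the standard simplicial frame of the dg-module $\QOp(r)$.

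Next, the simplicial set $\Map_{\M}(M,\QOp)$ is in fact a simplicial $\kk$-module: its set of $n$-simplices agrees with the degree-$0$ cycles of the dg-hom $\Hom_{\M}(M,\QOp^{\Delta^n})$. As recalled at the beginning of the section for mapping spaces in $\C$, the normalized chain complex of such a simplicial $\kk$-module is formally identified with the dg-hom $\Hom_{\M}(M,\QOp)$; the identification is verified arity-wise and is automatically compatible with the symmetric-group actions that define morphisms in $\M$. The Dold-Kan correspondence then yields
\begin{equation*}
\pi_*(\Map_{\Op_0}(\FOp(M),\QOp)) = \pi_*(\Map_{\M}(M,\QOp)) = H_*(\Hom_{\M}(M,\QOp)),
\end{equation*}
as asserted.

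The only point requiring genuine verification is the arity-wise reduction of the operadic simplicial frame $\QOp^{\Delta^\bullet}$ to standard dg-module frames on each $\QOp(r)$, together with the equivariant identification of the normalized chains of the $\Sigma_*$-object valued mapping space with $\Hom_{\M}(M,\QOp)$. The cofibrancy hypothesis on $M$ enters here implicitly, guaranteeing that the model-categorical mapping space indeed captures the derived object whose homotopy is computed by $H_*(\Hom_{\M}(M,-))$. Once these compatibilities are in place, the remainder of the proof is a formal combination of the free-forgetful adjunction with Dold-Kan.
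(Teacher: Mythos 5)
Your overall strategy---reduce to $\Sigma_*$-objects by the free--forgetful adjunction and then conclude by Dold--Kan---is the same as the paper's, and the adjunction step $\Mor_{\Op_0}(\FOp(M),\QOp^{\Delta^n})=\Mor_{\M}(M,U\QOp^{\Delta^n})$ is fine. The gap is in the step where you claim that the operadic simplicial frame $\QOp^{\Delta^\bullet}$ ``restricts to the standard simplicial frame of the dg-module $\QOp(r)$'' in each arity. Being a simplicial frame is a property (the Reedy fibration and weak-equivalence conditions), not a uniquely determined structure: $U\QOp^{\Delta^\bullet}$ is indeed a simplicial frame of $U\QOp$ in $\M$ because $U$ preserves limits, fibrations and weak-equivalences, but there is no reason for it to coincide with the standard frame $\Hom_{\C}(N_*(\Delta^\bullet),\QOp(r))$ on the nose. (Indeed, the aritywise standard frame generally carries no operad structure---this is exactly the difficulty that makes path objects for operads nontrivial to construct---so an operadic frame typically \emph{cannot} equal the standard dg frame aritywise.) Consequently the normalized chain complex of the simplicial $\kk$-module $\Mor_{\M}(M,U\QOp^{\Delta^\bullet})$ is not ``formally identified'' with $\Hom_{\M}(M,\QOp)$; that identification is only formal for the standard (co)frame.

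The repair is the frame-comparison result: for $M$ cofibrant in $\M$ (equivalently $\FOp(M)$ cofibrant in $\Op_0$) and $\QOp$ fibrant, any two frames compute weakly equivalent mapping spaces. This is precisely how the paper proceeds: it builds an explicit \emph{cosimplicial} frame $M\otimes N_*(\Delta^\bullet)$ of $M$ (cofibrancy of $M$ is what makes this a frame), pushes it forward to a cosimplicial frame $\FOp(M\otimes N_*(\Delta^\bullet))$ of $\FOp(M)$ by the left Quillen functor $\FOp$, invokes \cite[Proposition 5.4.7]{Hovey} to compare $\Mor_{\Op_0}(\FOp(M\otimes N_*(\Delta^\bullet)),\QOp)$ with $\Mor_{\Op_0}(\FOp(M),\QOp^{\Delta^\bullet})$ up to weak equivalence, and only then applies adjunction and Dold--Kan, where the identification of normalized chains with $\Hom_{\M}(M,\QOp)$ really is formal. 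So your conclusion is correct and your route is essentially the paper's, but the one step you flag as ``the only point requiring genuine verification'' would fail as you propose to verify it; it must be replaced by a comparison of frames up to homotopy rather than a literal aritywise identification. Note also that the cofibrancy of $M$ enters exactly there (to make the frame comparison valid), not merely ``implicitly.''
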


\begin{proof}
The construction of cosimplicial frames of dg-modules,
reviewed in the introduction of this section,
has a straightforward generalization in the category of $\Sigma_*$-objects:
in the definition,
we just replace the tensor product of dg-modules $\otimes: \C\times\C\rightarrow\C$
by the external tensor product of the category of $\Sigma_*$-objects $\otimes: \M\times\C\rightarrow\M$
defined termwise by $(M\otimes D)(n) = M(n)\otimes D$,
for any $M\in\M$ and any $D\in\C$;
the cosimplicial $\Sigma_*$-object $M\otimes N_*(\Delta^{\bullet})$
defined by the tensor product of $M$
with the normalized chain complexes of the simplices $\Delta^{n}$
satisfies clearly $M\otimes N_*(\Delta^{0}) = M$;
if $M$ is a cofibrant $\Sigma_*$-object,
then the morphisms $\eta_i: M\otimes N_*(\Delta^{0})\rightarrow M\otimes N_*(\Delta^{n})$
also satisfy the second requirement of the definition of cosimplicial frames,
because so do the morphisms $\eta_i: N_*(\Delta^{0})\rightarrow N_*(\Delta^{n})$
in the category of dg-modules,
and the external tensor product $\otimes: \M\times\C\rightarrow\M$
preserves colimits on both sides
as well as cofibrations (see~\cite[Lemma 11.4.5]{FresseModules});
the augmentation $\epsilon: M\otimes N_*(\Delta^{n})\rightarrow M\otimes N_*(\Delta^{0})$
is also a weak-equivalence
because the external tensor product $\otimes: \M\times\C\rightarrow\M$
preserves weak-equivalences between cofibrant objects.

By adjunction of model categories,
the cosimplicial free operad $\FOp(M\otimes N_*(\Delta^{\bullet}))$
associated to this cosimplicial frame $M\otimes N_*(\Delta^{\bullet})$
defines a cosimplicial frame of~$\FOp(M)$
in the category of operads.
In the definition of the mapping space $\Map_{\Op_0}(\FOp(M),\QOp)$
we use a simplicial frame of the target $\QOp$,
but we have an isomorphism
\begin{equation*}
\pi_*(\Mor_{\Op_0}(\FOp(M\otimes N_*(\Delta^{\bullet})),\QOp))\simeq\pi_*(\Mor_{\Op_0}(\FOp(M),\QOp^{\Delta^{\bullet}}))
\end{equation*}
by~\cite[Proposition 5.4.7]{Hovey}.
By adjunction,
we also have $\Mor_{\Op_0}(\FOp(M\otimes N_*(\Delta^{\bullet})),\QOp)\simeq\Mor_{\M}(M\otimes N_*(\Delta^{\bullet}),\QOp)$.
The mapping space $\Map_{\M}(M,\QOp) = \Mor_{\M}(M\otimes N_*(\Delta^{\bullet}),\QOp)$
is a simplicial $\kk$-module, just like a mapping space of dg-modules,
with the dg-hom of $\Sigma_*$-objects $\Hom_{\M}(M,\QOp)$
as associated normalized chain complex.
Hence,
we obtain identities
\begin{equation*}
\pi_*(\Map_{\Op_0}(\FOp(M),\QOp))\simeq\pi_*(\Map_{\M}(M,\QOp))\simeq H_*(\Hom_{\M}(M,\QOp))
\end{equation*}
and this achieves the proof of the proposition.
\end{proof}

\subsubsection{Recollections on Bousfield-Kan' extended homotopy spectral sequence}\label{HomotopySpectralSequence:SpectralSequence}
The extended homotopy spectral sequence
of Bousfield-Kan (see~\cite[\S IX.4]{BousfieldKan})
is a spectral sequence in sets
associated to any tower of fibrations
of based simplicial sets
\begin{equation*}
\xymatrix{ X = \lim_{s} X_{s}\ar[r] & \cdots\ar[r] & X_{s}\ar[r] & X_{s-1}\ar[r] & \cdots\ar[r] & X_0\ar[r] & X_{-1} = *\\
&\cdots & F_{s}\ar[u] & F_{s-1}\ar[u] & \cdots & F_{0}\ar[u] & }.
\end{equation*}
Recall simply that the $E_1$-term
of this spectral sequence is defined by the homotopy
of the fibers $F_s$,
so that:
\begin{equation*}
E_1^{s t} = \pi_{t-s}(F_s,*),\quad\text{whenever $t-s\geq 0$, for all $s\geq 0$}.
\end{equation*}

In the next section,
we adapt an analysis of~\cite{BousfieldKan} to determine homotopy groups $\pi_*(\lim_{s} X_{s})$
from $E_1^{s t}$
in a situation where the extended homotopy spectral sequence
degenerates at $E_1$-stage.
For our purpose,
we apply the extended homotopy spectral sequence
to towers of fibrations arising from the mapping space decomposition of Proposition~\ref{HomotopySpectralSequence:MappingSpaceDecomposition}.
In this context,
we have by Proposition~\ref{HomotopySpectralSequence:MappingSpaceDecomposition}
and Proposition~\ref{HomotopySpectralSequence:MappingSpaceFibers}:
\begin{equation*}
E_1^{s t} = H_{t-s}(\Hom_{\M}(M(s),\QOp)),
\end{equation*}
whenever the definition of $E_1^{s t}$ makes sense.
Note that the tower of fibrations of Proposition~\ref{HomotopySpectralSequence:MappingSpaceDecomposition}
really begins at $s=2$
since we have $\sk_0\POp = \sk_1\POp = \IOp$.

\subsubsection*{Remark}
The thesis~\cite{Rezk} gives, in the simplicial setting,
a spectral sequence computing the homotopy of operadic mapping spaces $\Map_{\Op_0}(\POp,\QOp)$
at a base point $\phi\in\Map_{\Op_0}(\POp,\QOp)_0$
from an operadic cohomology $H^*_{\Op_0}(\POp,\QOp)$.
If $\POp$ and $\QOp$ are both discrete operads,
then this spectral sequence can be identified with the extended homotopy spectral sequence associated to a decomposition
of the operadic mapping space $\Map_{\Op_0}(\FOp^{\bullet}(\POp),\QOp)$,
where $\FOp^{\bullet}(\POp)$
refers to the usual cotriple resolution of~$\POp$
in the category of operads.

If $\POp = (\FOp(M),\partial)$ is the quasi-free model of a binary Koszul operad $\HOp$,
equipped with a trivial differential,
and $\QOp$ is also equipped with a trivial differential,
then the term $E_2^{s t}$ of our extended homotopy spectral sequence
can also be identified with an operadic cohomology $H^*_{\Op_0}(\HOp,\QOp)$.
In this setting,
the extended homotopy spectral sequence of~\S\ref{HomotopySpectralSequence:SpectralSequence}
agrees with the spectral sequence of~\cite{Rezk}.

The application and the analysis of such general spectral sequences, computing operadic mapping spaces from operadic cohomology groups,
is quite involved
in the context of $E_n$-operads.
The main results of this paper, proved in the next section,
rely on a basic application of the decomposition of Proposition~\ref{HomotopySpectralSequence:MappingSpaceDecomposition}.

\section{Applications of the extended homotopy spectral sequence}\label{HomotopyGroups}
The goal of this section is to prove Theorem~\ref{Result:MappingSpaces},
the main result of the article.
For this aim,
we apply the homotopy spectral sequence of~\S\ref{HomotopySpectralSequence:SpectralSequence}
to mapping spaces~$\Map_{\Op_0}(\POp,\COp)$
such that $\POp = \BOp^c(\Lambda^{-n}\EOp_n^{\vee})$ is the operadic cobar construction $\BOp^c(-)$
applied to the $n$-fold operadic desuspension $\Lambda^{-n}$
of the dual cooperad $\EOp_n^{\vee}$
of a certain $E_n$-operad~$\EOp_n$.

For the moment,
we assume $n<\infty$.
In fact,
we take the same $E_n$-operad~$\EOp_n$ as in~\cite{FresseEnKoszulDuality},
namely a certain suboperad of the chain Barratt-Eccles operad $\EOp = N_*(E\Sigma_*)$ (see~\cite{BarrattEccles,BergerFresse})
so that $\EOp_n$ is equivalent to the chain operad of little $n$-cubes (see~\cite{BergerFresse}).
For our purpose,
we essentially need to recall that the dg-modules $\EOp_n(r)$
are bounded and finitely generated, for all $r\in\NN$,
and satisfy $\EOp_n(0) = 0$, $\EOp_n(1) = \kk$.
Hence,
the collection $\EOp_n^{\vee}$ of dual objects $\EOp_n^{\vee}(r) = \EOp_n(r)^{\vee}$
in the category of dg-modules inherit a cooperad structure.
In~\cite[Proposition 1.3.5]{FresseEnKoszulDuality},
we also prove that the underlying $\Sigma_*$-object
of this cooperad $\EOp_n^{\vee}$
is cofibrant.

The operadic suspension of a $\Sigma_*$-object $M$
is the $\Sigma_*$-object such that:
\begin{equation*}
\Lambda M(r) = \kk[1-r]\otimes M(r)^{\pm},
\end{equation*}
where $\kk[1-r]$ is a monogeneous dg-module concentrated in degree $1-r$
and the exponent $\pm$
refers to a twist, by the signature of permutations,
of the action of~$\Sigma_r$ on~$M(r)$.
The operadic desuspension is the inverse operation
of the operadic suspension.
The operadic suspensions (and desuspensions) of a cooperad $\DOp$
inherit a cooperad structure.
The cobar construction $\BOp^c(\DOp)$ of a cooperad $\DOp$ satisfying $\DOp(0) = 0$ and $\DOp(1) = \kk$
is a quasi-free operad
\begin{equation*}
\BOp^c(\DOp) = (\FOp(\kk[-1]\otimes\widetilde{\DOp}),\partial),
\end{equation*}
where $\widetilde{\DOp}$ refers to the coaugmentation coideal of~$\DOp$,
the $\Sigma_*$-object such that
\begin{equation*}
\widetilde{\DOp}(r) = \begin{cases} 0, & \text{if $r=0,1$}, \\ \DOp(r), & \text{otherwise}, \end{cases}
\end{equation*}
and the $\Sigma_*$-object $\kk[-1]\otimes\widetilde{\DOp}$
is defined termwise by the tensor products $(\kk[-1]\otimes\widetilde{\DOp})(n) = \kk[-1]\otimes\widetilde{\DOp}(n)$.

For short,
we set $\DOp_n = \Lambda^{-n}\EOp_n^{\vee}$.
For our purpose,
we do not need to review the definition of the twisting derivation
of the cobar construction $\BOp^c(\DOp_n)$.
Note simply that this twisting derivation
satisfies the requirement of~\S\ref{HomotopySpectralSequence:QuasiFreeFiltration}.
In fact,
when we analyze the extended homotopy spectral sequence
of~\S\ref{HomotopySpectralSequence:SpectralSequence},
we obtain immediately:

\begin{lemm}\label{HomotopyGroups:SpectralSequence}
For the operads $\POp = \BOp^c(\DOp_n)$ and $\QOp = \COp$,
the extended homotopy spectral sequence associated to the tower of fibrations of Proposition~\ref{HomotopySpectralSequence:MappingSpaceDecomposition}
satisfies
\begin{equation*}
E_1^{s t} = \begin{cases} \kk, & \text{if $s = 2$ and $t-s = 0$}, \\ 0, & \text{otherwise}, \end{cases}
\end{equation*}
whenever the definition of $E_1^{s t}$
makes sense (in the range $s\geq 2$ and $t-s\geq 0$).
\end{lemm}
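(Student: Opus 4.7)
The plan is to apply Propositions~\ref{HomotopySpectralSequence:MappingSpaceDecomposition} and~\ref{HomotopySpectralSequence:MappingSpaceFibers} with $\POp = \BOp^c(\DOp_n) = (\FOp(\kk[-1]\otimes\widetilde{\DOp_n}),\partial)$ and $\QOp = \COp$, and then to conclude by a simple degree count for $s \geq 3$ combined with a direct $\Sigma_2$-calculation for $s = 2$.

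First, the two cited propositions give
\begin{equation*}
E_1^{s,t} = H_{t-s}\bigl(\Hom_{\M}(M(s),\COp)\bigr),
\end{equation*}
where the generating $\Sigma_*$-object is $M = \kk[-1]\otimes\widetilde{\DOp_n}$, so $M(s) = \kk[-1]\otimes\Lambda^{-n}\EOp_n^{\vee}(s)$ in arity $s \geq 2$. Since $\COp(s) = \kk$ carries the trivial $\Sigma_s$-action, the right-hand side reduces to $\Hom_{\kk[\Sigma_s]}(M(s),\kk)$. Unwinding the shift $\kk[-1]$ together with the operadic desuspension $\Lambda^{-n}$---which in arity $s$ is a degree shift by $n(s-1)$ together with a twist of the $\Sigma_s$-action by the $n$-th power of the signature character---and using the boundedness of the chosen model of $\EOp_n$, which satisfies $\EOp_n(s)_d = 0$ for $d > (n-1)(s-1)$, I would observe that $M(s)$ is supported in degrees in the interval $[s-2,\,n(s-1)-1]$.

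For $s \geq 3$ this interval lies in strictly positive degrees, so $\Hom_{\kk[\Sigma_s]}(M(s),\kk)$ is concentrated in strictly negative degrees and therefore has vanishing homology in every nonnegative degree. This gives $E_1^{s,t} = 0$ throughout the range $t - s \geq 0$ for $s \geq 3$. For $s = 2$ the interval is $[0,\,n-1]$, so only the degree-$0$ stratum of $M(2)$ contributes to $E_1^{2,t}$ with $t - 2 \geq 0$. A short computation---using that the equality $H_0(\Hom_{\kk[\Sigma_s]}(M(s),\kk)) = \Hom_{\kk[\Sigma_s]}(H_0(M(s)),\kk)$ holds without universal-coefficient corrections since $M(2)$ sits in nonnegative degrees---identifies $H_0(M(2))$ with $H_{n-1}(\EOp_n(2))^{\vee}$ twisted by $\pm^n$. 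The antipodal action of $\Sigma_2$ on $S^{n-1}\simeq\EOp_n(2)$ acts on $H_{n-1}$ by $(-1)^n$, and this sign is exactly cancelled by the twist $\pm^n$ coming from $\Lambda^{-n}$, leaving the trivial $\Sigma_2$-representation. The equivariant hom to $\kk$ is therefore one-dimensional, yielding $E_1^{2,2} = \kk$.

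The main obstacle, once this combinatorial set-up is in place, is simply the careful tracking of signs and degree shifts across both parities of $n$, together with ensuring that the $\Sigma_*$-cofibrancy of $\EOp_n^{\vee}$ from~\cite[Proposition 1.3.5]{FresseEnKoszulDuality} is used correctly to rule out torsion phenomena in the identification of $H_0$ over an arbitrary ground ring $\kk$.
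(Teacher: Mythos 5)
There is a genuine gap in your treatment of the case $s\geq 3$. Your vanishing argument rests on the claim that the chain complex $\EOp_n(s)$ itself satisfies $\EOp_n(s)_d = 0$ for $d>(n-1)(s-1)$, so that $M(s)$ is supported in degrees $\geq s-2$. This chain-level bound is false for the Berger--Fresse model used here (the paper only asserts that the $\EOp_n(r)$ are bounded and finitely generated). For instance, the nondegenerate $3$-simplex $(123,\,213,\,231,\,321)$ of $E\Sigma_3$ has each pair of letters changing relative order exactly once, hence has all pairwise complexities $\leq 2$ and lies in $\EOp_2(3)$ in degree $3 > 2 = (n-1)(s-1)$. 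Consequently $M(3)$ reaches down to degree $0$ already for $n=2$, $s=3$, and your degree count does not kill $E_1^{3,3}$. What is true---and what the paper actually uses---is that the \emph{homology of the coinvariants} $H_*(\EOp_n(s)_{\Sigma_s})$ vanishes in degrees $d>(n-1)(s-1)$; this is a substantive input from Cohen's computation of the homology of unordered configuration spaces, quoted via \cite[Proposition 1.2.8]{FresseEnKoszulDuality}, and it is available only after one has rewritten $E_1^{s t} = H_{t-s-1}\bigl((\DOp_n(s)^{\vee})^{\Sigma_s}\bigr) = H_{t-s+n(s-1)-1}(\EOp_n(s)_{\Sigma_s})$ using the finiteness of $\EOp_n(s)$ and the freeness of the $\Sigma_s$-action. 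Your proposal never passes to coinvariants and never invokes this homological vanishing, so the core of the lemma for $s\geq 3$ (and for $s=2$, $t-s>0$) is unproved.

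The remaining parts of your argument are closer to the mark. The identification $E_1^{s,t} = H_{t-s}(\Hom_{\M}(M(s),\COp))$ with $M = \kk[-1]\otimes\widetilde{\DOp}_n$ and the reduction to $\Sigma_s$-equivariant maps into the trivial representation $\kk$ agree with the paper. Your $s=2$ computation is essentially the paper's: there $\EOp_n(2)$ really is the degree-$\leq n-1$ truncation of the standard free resolution of the trivial $\Sigma_2$-module, so the support bound holds in that one arity and the top homology of the coinvariants is $\kk$; the paper states this as $H_{n-1}(\EOp_n(2)_{\Sigma_2})=\kk$ rather than tracking the signature twist on $H_{n-1}(S^{n-1})$, but the content is the same. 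Note also that the paper treats $n=1$ separately (where $\EOp_1=\AOp(s)=\kk[\Sigma_s]$ is concentrated in degree $0$ and the degree count \emph{does} suffice); your uniform argument happens to be valid in that case precisely because the false support bound becomes true there.
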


\begin{proof}
In~\S\ref{HomotopySpectralSequence:SpectralSequence},
we record that $E_1^{s t} = H_*(\Hom_{\M}(M(s),\QOp)$,
for any pair $\POp = (\FOp(M),\partial)$ and $\QOp\in\Op_0$.
In the case $\POp = \BOp^c(\DOp_n)$ and $\QOp = \COp$,
we obtain immediately $E_1^{s t} = H_{t-s-1}((\DOp_n(s)^{\vee})^{\Sigma_s})$, for all $s\geq 2$,
since $\COp(s) = \kk$
is the trivial representation of $\Sigma_s$.
Hence we have
\begin{equation*}
E_1^{s t} = H_{t-s-1}(\Lambda^n\EOp_n(s)_{\Sigma_s}) = H_{t-s+n(s-1)-1}(\EOp_n(s)_{\Sigma_s})
\end{equation*}
because each dg-module $\EOp_n(s)$ is finitely generated
and the symmetric group $\Sigma_s$
acts freely on $\EOp_n(s)$.

The case $n=1$ is easy, because our $E_1$-operad $\EOp_1$
is identified with the associative operad $\AOp$
and $\AOp(s)$ is the regular representation of the symmetric group $\Sigma_s$, viewed as a dg-module concentrated in degree $0$.
Thus, we focus on cases $n>1$.

In~\cite[Proposition 1.2.8]{FresseEnKoszulDuality},
we observe that computations of~\cite{Cohen} imply that $H_*(\EOp_n(s)_{\Sigma_s})$ vanishes in degree $d>(n-1)(s-1)$,
from which we deduce the identity $E_1^{s t} = 0$ when $s-2>0$ or $t-s>0$.
In the case $s=2$,
the dg-module $\EOp_n(2)$
is identified with a truncation in degree $d\leq n-1$ of the usual free resolution of the trivial $\Sigma_2$-module
of rank $1$
and we have $H_{n-1}(\EOp_n(s)_{\Sigma_s}) = \kk$.
Therefore
we obtain the identity $E_1^{s t} = \kk$ for $s=2$ and $t-s=0$.
\end{proof}

\subsubsection{Analysis of base points}\label{HomotopyGroups:BasePoints}
We study the image of morphisms $\phi: \BOp^c(\DOp_n)\rightarrow\COp$
in $E_1^{2 2} = \pi_0(\Map_{\Op_0}(\sk_2 \BOp^c(\DOp_n),\COp))$.
We focus on cases $n>1$ first.

Recall that the commutative operad $\COp$ is generated by an operation $\mu\in\COp(2)$
which represents the structure product
of commutative algebras.

In the proof of Lemma~\ref{HomotopyGroups:SpectralSequence},
we recall that the dg-module $\EOp_n(2)$
is identified with a truncation in degree $d\leq n-1$ of the usual free resolution of the trivial $\Sigma_2$-module
of rank $1$.
In view of this identity,
the suspended dg-module $\DOp_n(2) = \Lambda^{-n}\EOp_n(2)^{\vee}$
satisfies $H_*(\DOp_n(2)) = \kk$ if $*=1,n$ and $H_*(\DOp_n(2)) = 0$
otherwise.

In~\cite[\S 0.3.2, \S 4.2.1]{FresseEnKoszulDuality},
we define cycles $\mu,\lambda_{n-1}\in\EOp_n(2)$ such that the homology class of $\mu$ (respectively, $\lambda_{n-1}$)
generates $H_*(\EOp_n(2))$ in degree $*=0$ (respectively, $*=n-1$)
and dual basis elements $\mu^{\vee},\lambda_{n-1}^{\vee}\in \DOp_n(2)$
such that the homology class of $\mu^{\vee}$ (respectively, $\lambda_{n-1}^{\vee}$)
generates $H_*(\DOp_n(2))$ in degree $* = n$ (respectively, $* = 1$).
Note that such elements $\mu^{\vee},\lambda_{n-1}^{\vee}\in\DOp_n(2)$
define cocycles in the cobar construction $\BOp^c(\DOp_n)$
because the relation $\partial(\sk_2 M)\subset\FOp(\sk_1 M) = \IOp$,
which holds for the quasi-free operad $\BOp^c(\DOp_n) = (\FOp(\kk[-1]\otimes\widetilde{\DOp}_n),\partial)$,
implies that the twisting derivation of~$\BOp^c(\DOp_n)$
vanishes on $\kk[-1]\otimes\widetilde{\DOp}_n(2)$

By~\cite[Lemma A]{FresseEnKoszulDuality} (see also the review of~\S 4.2 in \emph{loc. cit.}),
we have a morphism $\phi: \BOp^c(\DOp_n)\rightarrow\COp$
such that $\phi_*(\lambda_{n-1}^{\vee}) = \mu$.
The restriction of this morphism to $\sk_2 \BOp^c(\DOp_n) = \FOp(M_{(2)})$, $M = \DOp_n$,
gives a vertex $\phi\in\Map_{\Op_0}(\FOp_{(2)}(M),\COp)$
generating the only non-trivial term $E_1^{2 2} = \kk$
of our homotopy spectral sequence.
(To check this,
apply the identity $E_1^{2 2} = H_{n-1}(\EOp_n(2)_{\Sigma_2})$ used in the proof of Lemma~\ref{HomotopyGroups:SpectralSequence}.)

Note further that each element of~$E_1^{2 2} = \pi_0(\Map_{\Op_0}(\FOp_{(2)}(M),\COp))$
is hit by a morphism $\phi_{c}: \BOp^c(\DOp_n)\rightarrow\COp$, $c\in\kk$,
simply defined by the composite of~$\phi: \BOp^c(\DOp_n)\rightarrow\COp$
with the morphism $\rho_{c}: \COp\rightarrow\COp$
such that $\rho_{c}(\mu) = c\cdot\mu$
for the generating operation of the commutative operad -- the definition of this morphism $\rho_{c}$
involves the convention $\COp(0) = 0$.

These concluding observations also hold in the case $n=1$.
Recall that our $E_1$-operad
is identified with the associative operad $\AOp$
and $\EOp_1(2) = \AOp(2)$
is the regular representation of the symmetric group $\Sigma_2$,
viewed as a dg-module concentrated in degree $0$.
The generating element $\mu\in\AOp(2)$
represents the structure product of associative algebras.
In this case,
we still have a morphism $\phi: \BOp^c(\DOp_1)\rightarrow\COp$
mapping the dual basis elements of $\kk[-1]\otimes\widetilde{\DOp}_1(2) = \AOp^{\vee}(2)$
to the generating operation of the commutative operad $\mu\in\COp(2)$.
Moreover,
the composition of this morphism $\phi: \BOp^c(\DOp_1)\rightarrow\COp$
with the rescaling $\rho_{c}: \COp\rightarrow\COp$
still gives morphisms $\phi_{c}: \BOp^c(\DOp_1)\rightarrow\COp$
so that every element of~$E_1^{2 2}$
is hit by the restriction of a morphism $\phi_{c}$, for some~$c\in\kk$.

\medskip
The result of Lemma~\ref{HomotopyGroups:SpectralSequence}
and this analysis
together give:

\begin{lemm}\label{HomotopyGroups:Abutment}
We have
\begin{equation*}
\pi_0(\Map_{\Op_0}(\BOp^c(\DOp_n),\COp)) = \kk
\quad\text{and}
\quad\pi_i(\Map_{\Op_0}(\BOp^c(\DOp_n),\COp),\phi) = *\quad\text{when $i>0$},
\end{equation*}
for every choice of morphism $\phi: \BOp^c(\DOp_n)\rightarrow\COp$
as base point.
\end{lemm}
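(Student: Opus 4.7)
The plan is to deduce the lemma directly from Lemma~\ref{HomotopyGroups:SpectralSequence} by unpacking the Bousfield--Kan tower. Write $X = \Map_{\Op_0}(\BOp^c(\DOp_n),\COp)$ and $X_s = \Map_{\Op_0}(\sk_s\BOp^c(\DOp_n),\COp)$, and consider the tower
\begin{equation*}
X = \lim_s X_s \to \cdots \to X_s \to X_{s-1}\to\cdots\to X_1 = *
\end{equation*}
supplied by Proposition~\ref{HomotopySpectralSequence:MappingSpaceDecomposition}. Combining that proposition with Proposition~\ref{HomotopySpectralSequence:MappingSpaceFibers}, the fiber $F_s$ of each transition map $X_s\to X_{s-1}$ satisfies $\pi_{t-s}(F_s) = E_1^{s,t}$, which by Lemma~\ref{HomotopyGroups:SpectralSequence} vanishes for $(s,t)\neq(2,2)$ and equals $\kk$ at $(s,t) = (2,2)$.

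The next step is to climb the tower. For $s\geq 3$ the fiber $F_s$ is weakly contractible, and the identification of fibers given by Proposition~\ref{HomotopySpectralSequence:MappingSpaceDecomposition} applies uniformly over every vertex of $X_{s-1}$ (its proof only requires a morphism on $\sk_{s-1}\BOp^c(\DOp_n)$, and the $H_{-1} = 0$ vanishing implicit in Lemma~\ref{HomotopyGroups:SpectralSequence} ensures that every vertex of $X_{s-1}$ lifts to $X_s$). Consequently $X_s\to X_{s-1}$ is a Kan fibration with contractible fibers, hence a trivial fibration. At the bottom, $X_2$ coincides with $F_2$ up to the weak equivalence $X_2\to X_1 = *$, so $\pi_0(X_2) = \kk$ and $\pi_i(X_2) = *$ for $i>0$. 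Since the tower becomes essentially constant from $s=2$ onward, the Milnor $\lim^1$ contribution vanishes and $X\xrightarrow{\sim} X_2$, yielding $\pi_0(X) = \kk$ and $\pi_i(X,\phi) = *$ for every $i>0$ and every choice of base point~$\phi$.

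Finally, to confirm that the set $\pi_0(X) = \kk$ is realized by genuine operadic morphisms---not merely as the formal abutment of a spectral sequence---I would invoke the rescaling construction of~\S\ref{HomotopyGroups:BasePoints}: for a fixed reference morphism and each $c\in\kk$, the composite $\phi_c = \rho_c\circ\phi$ with $\rho_c(\mu) = c\cdot\mu$ gives a vertex of $X$ whose image in $E_1^{2,2} = \kk$ is $c$, so the bijection $\pi_0(X)\cong\kk$ is realized on actual morphisms. The main subtle point in this strategy is the uniformity just mentioned: without knowing the fibers $F_s$ are contractible over \emph{every} vertex of $X_{s-1}$ (and not just over a single chosen base point), one could only conclude that $X_s\to X_{s-1}$ is a weak equivalence on one component, which is not enough to propagate trivial higher homotopy to $X$ at every base point. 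Establishing this uniformity by re-reading the proof of Proposition~\ref{HomotopySpectralSequence:MappingSpaceDecomposition} is the only non-mechanical part of the argument; everything else is an immediate application of Lemma~\ref{HomotopyGroups:SpectralSequence} and the standard homotopy theory of towers of Kan fibrations.
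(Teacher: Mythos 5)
Your overall strategy is the same as the paper's (climb the tower of Proposition~\ref{HomotopySpectralSequence:MappingSpaceDecomposition} using the degeneration of Lemma~\ref{HomotopyGroups:SpectralSequence}, and use the rescaling morphisms $\phi_c$ to realize $\pi_0$), but there is a genuine gap at exactly the step you flag as the crux: the claim that each $\Map_{\Op_0}(\sk_s\POp,\COp)\rightarrow\Map_{\Op_0}(\sk_{s-1}\POp,\COp)$ is a \emph{trivial} fibration for $s\geq 3$. Two things go wrong. First, the fiber identification of Proposition~\ref{HomotopySpectralSequence:MappingSpaceDecomposition} is \emph{not} available over an arbitrary vertex of $\Map_{\Op_0}(\sk_{s-1}\POp,\COp)$: its proof writes $g = f + u$ on $M(s)$ and uses $(\delta(f)-\phi_f\cdot\theta)|_{M(s)}=0$, i.e.\ it needs $f$ to be defined on $M(s)$ and to satisfy the morphism equation there, so it requires (at least) a lift of the vertex to $\sk_s\POp$, not merely ``a morphism on $\sk_{s-1}\POp$'' as you assert. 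Over a vertex that does not lift, the fiber is empty, not contractible. Second, the surjectivity of each stage on vertices is controlled by an obstruction class in $H_{-1}(\Hom_{\M}(M(s),\COp)) = E_1^{s,s-1}$, which lies outside the range $t-s\geq 0$ computed in Lemma~\ref{HomotopyGroups:SpectralSequence}; there is no ``implicit $H_{-1}=0$'' there. Concretely, $H_{-1}(\Hom_{\M}(M(s),\COp)) = H_{n(s-1)-2}(\EOp_n(s)_{\Sigma_s})$, and for $s=3$ the degree $2n-2$ equals the top of Cohen's vanishing range $(n-1)(s-1)$, so the vanishing theorem quoted in the paper does not settle it. Without surjectivity the map need not be a weak equivalence (it could miss components), so the conclusion $X\xrightarrow{\sim}X_2$ does not follow as written.

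The paper's proof is arranged precisely to sidestep both points. One fixes a base point of $\Map_{\Op_0}(\POp,\COp)$ coming from a genuine morphism $\phi\colon\POp\rightarrow\COp$; over its restrictions the fibers are identified and contractible for $s\geq 3$, and the connectivity lemma of Bousfield--Kan (\cite[Chapter IX, Lemma 5.1]{BousfieldKan}) applied to the tower of fibers of $\Map_{\Op_0}(\sk_s\POp,\COp)\rightarrow\Map_{\Op_0}(\sk_2\POp,\COp)$ shows that the fiber of $\Map_{\Op_0}(\POp,\COp)\rightarrow\Map_{\Op_0}(\sk_2\POp,\COp)$ over any such point is contractible. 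Surjectivity of the individual stages is never claimed; instead, the explicit morphisms $\phi_c$ of~\S\ref{HomotopyGroups:BasePoints} show directly that every component of $\Map_{\Op_0}(\sk_2\POp,\COp)$ is hit by a global morphism, which is all that is needed to conclude. So the rescaling construction, which you present as an optional confirmation at the end, is in fact the load-bearing ingredient that replaces your unproved uniformity claim; reorganized this way, your argument closes.
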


\begin{proof}
For short, we set $\POp = \BOp^c(\DOp_n)$.
The lemma is a consequence of the result of Lemma~\ref{HomotopyGroups:SpectralSequence},
the observations of~\S\ref{HomotopyGroups:BasePoints},
and the connectivity lemma of~\cite[\S IX.5]{BousfieldKan}.
In brief:
we take the fibers of the maps
$\Map_{\Op_0}(\sk_s\POp,\QOp)\rightarrow\Map_{\Op_0}(\sk_2\POp,\QOp)$
to obtain a tower of fibrations
satisfying the exact assumptions of~\cite[Chapter IX, Lemma 5.1]{BousfieldKan};
the assertion of this reference implies that the fiber of the map $\Map_{\Op_0}(\POp,\QOp)\rightarrow\Map_{\Op_0}(\sk_2\POp,\QOp)$
is contractible, for any choice of base point in~$\Map_{\Op_0}(\POp,\QOp)$;
the observations of~\S\ref{HomotopyGroups:BasePoints}
imply moreover that any base point of~$\Map_{\Op_0}(\sk_2\POp,\QOp)$
comes from $\Map_{\Op_0}(\POp,\QOp)$;
our claim follows immediately.
\end{proof}

Now,
the main result of~\cite{FresseEnKoszulDuality}
asserts:

\begin{fact}[{see~\cite[Theorem A]{FresseEnKoszulDuality}}]
The operad $\POp_n = \BOp^c(\DOp_n) = \BOp^c(\Lambda^{-n}\EOp_n^{\vee})$
is a cofibrant $E_n$-operad,
for every $n<\infty$.
\end{fact}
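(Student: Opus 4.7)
The plan is to split the claim into two assertions: (i) $\BOp^c(\DOp_n)$ is a cofibrant operad, and (ii) $\BOp^c(\DOp_n)$ is weakly equivalent to the original $E_n$-operad $\EOp_n$, hence qualifies as an $E_n$-operad in its own right.

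For (i), cofibrancy is essentially formal from the filtration machinery recalled in~\S\ref{HomotopySpectralSequence:QuasiFreeFiltration}. By construction, $\BOp^c(\DOp_n) = (\FOp(M),\partial)$ is quasi-free on $M = \kk[-1]\otimes\widetilde{\DOp}_n$. The $\Sigma_*$-object $\EOp_n^{\vee}$ is cofibrant by Proposition~1.3.5 of~\cite{FresseEnKoszulDuality}, and operadic desuspension, degree shift and the passage to the coaugmentation coideal all preserve cofibrancy, so $M$ is cofibrant with $M(0) = M(1) = 0$. The cobar differential is produced from the dual of the reduced cooperad composition of~$\DOp_n$ and by its very shape sends $M$ into $\bigoplus_{r\geq 2}\FOp_r(M)$. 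The arity tower of~\S\ref{HomotopySpectralSequence:QuasiFreeFiltration} then consists of a sequence of operad cofibrations $\sk_{s-1}\BOp^c(\DOp_n)\hookrightarrow\sk_s\BOp^c(\DOp_n)$, and the colimit $\BOp^c(\DOp_n)$ is therefore cofibrant.

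For~(ii), the strategy is to exhibit an explicit operad morphism $\psi\colon \BOp^c(\DOp_n)\rightarrow\EOp_n$ and to prove that it is a quasi-isomorphism arity by arity. Such a~$\psi$ is determined by a degree-$0$ homomorphism $f\colon M\rightarrow\EOp_n$ satisfying the relation~(\ref{QuasiFreeMorphism}); the natural choice sends each dual basis element $x^{\vee}\in \kk[-1]\otimes\widetilde{\DOp}_n$ to a preferred chain-level representative of the Koszul-dual homology class inside $\EOp_n$. Verifying~(\ref{QuasiFreeMorphism}) reduces to matching the cobar differential with the boundaries of the chosen representatives, which can be arranged using the combinatorial structure of the chosen suboperad of the Barratt--Eccles operad.

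The main obstacle is then proving that $\psi$ induces isomorphisms in homology; this is a Koszul-duality statement for $E_n$-operads. The plan is to filter both sides by arity of generators/composites and to compare the induced spectral sequences. On $E^1$ one finds, on the source, a cobar-type construction of the homology cooperad $H_*(\DOp_n) = \Lambda^{-n}H_*(\EOp_n)^{\vee}$, and on the target, the $n$-Gerstenhaber operad $H_*(\EOp_n)$ computed by F.~Cohen. This operad is binary and quadratic, known to be Koszul and self-dual up to an $n$-fold operadic suspension, so the cobar construction of its Koszul-dual cooperad is quasi-isomorphic to $H_*(\EOp_n)$; the comparison spectral sequence degenerates accordingly and $\psi$ is a quasi-isomorphism. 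The truly delicate point --- which is the content of Theorem~A of~\cite{FresseEnKoszulDuality} --- is to lift this homological Koszul duality to the chain level, by choosing the suboperad $\EOp_n\subset N_*(E\Sigma_*)$ together with a PBW-style basis compatible with the arity filtration, so that the abstract Koszul resolution can be realized by the explicit morphism $\psi$.
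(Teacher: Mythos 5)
Be aware first that the paper does not prove this statement at all: it is displayed as a \emph{Fact} and imported verbatim from \cite[Theorem A]{FresseEnKoszulDuality}, so there is no in-paper argument to compare your proposal against. Judged on its own terms, your part~(i) is sound and matches the machinery the paper does set up: $\BOp^c(\DOp_n)$ is quasi-free on $M = \kk[-1]\otimes\widetilde{\DOp}_n$, the cofibrancy of $\EOp_n^{\vee}$ as a $\Sigma_*$-object is \cite[Proposition 1.3.5]{FresseEnKoszulDuality}, and the arity tower of \S\ref{HomotopySpectralSequence:QuasiFreeFiltration} together with \cite[Lemma 1.4.11]{FresseCylinder} exhibits $\BOp^c(\DOp_n)$ as a sequential colimit of operad cofibrations out of $\IOp$, hence as a cofibrant operad.

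Part~(ii), however, is a plan rather than a proof, and the steps you leave open are precisely the content of the theorem you are trying to establish. Concretely: first, the existence of a chain-level $f\colon M\rightarrow\EOp_n$ satisfying~(\ref{QuasiFreeMorphism}) is not a routine verification --- as the concluding section of this paper indicates, \cite{FresseEnKoszulDuality} in fact begins by constructing a morphism $\phi\colon\BOp^c(\DOp_n)\rightarrow\COp$ (its Lemma~A) and works from there towards the equivalence with $\EOp_n$, rather than writing down a direct $\psi$ into the Barratt--Eccles suboperad. Second, the assertion that $\GOp_n = H_*(\EOp_n)$ is Koszul and self-dual up to operadic suspension is classical only over $\QQ$ (Getzler--Jones); over an arbitrary ground ring $\kk$ --- the setting of this paper --- it is itself part of what \cite{FresseEnKoszulDuality} proves, so invoking it as ``known'' is circular. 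Third, the degeneration of your comparison spectral sequence requires the arity-wise homology of $\EOp_n$ to be $\kk$-free with compatible splittings of the filtration, which again rests on the explicit cell structure you defer to the reference. Your proposal correctly identifies the shape of the argument, but it does not close the gap that the citation to \cite[Theorem A]{FresseEnKoszulDuality} is there to fill; a self-contained proof along your lines would have to supply all three of these ingredients.
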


Hence,
the result of Lemma~\ref{HomotopyGroups:Abutment}
gives the conclusion of Theorem~\ref{Result:MappingSpaces}
in the case $n<\infty$.

\medskip
In~\cite{FresseEnKoszulDuality},
we also prove that the cooperads $\DOp_n = \Lambda^{-n}\EOp_n^{\vee}$
are connected by morphisms $\sigma^*: \DOp_{n-1}\rightarrow\DOp_n$
such that:

\begin{fact}[{see~\cite[Theorem B]{FresseEnKoszulDuality}}]\label{HomotopyGroups:LimitCobar}
The operad $\POp_{\infty} = \BOp^c(\DOp_{\infty})$
defined by the cobar construction of the colimit cooperad
\begin{equation*}
\DOp_{\infty} = \colim_n\{\DOp_1\xrightarrow{\sigma^*}\cdots\xrightarrow{\sigma_*}\DOp_{n-1}\xrightarrow{\sigma_*}\DOp_{n}\xrightarrow{\sigma^*}\cdots\}
\end{equation*}
is a cofibrant $E_{\infty}$-operad.
\end{fact}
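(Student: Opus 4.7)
The plan is to deduce the $n=\infty$ case from the finite one (the preceding Fact, i.e., Theorem~A of~\cite{FresseEnKoszulDuality}) by passing to the sequential colimit along the cooperad morphisms $\sigma^*\colon\DOp_{n-1}\to\DOp_n$. First I would observe that the cobar construction commutes with such filtered colimits. The underlying graded $\Sigma_*$-object of $\BOp^c(\DOp) = (\FOp(\kk[-1]\otimes\widetilde{\DOp}),\partial)$ is obtained by applying the free operad functor (a left adjoint, hence colimit-preserving) to a colimit-preserving functor of $\DOp$; the twisting derivation is natural in $\DOp$, so the inclusions $\BOp^c(\DOp_{n-1})\hookrightarrow\BOp^c(\DOp_n)$ respect the differentials. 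Consequently $\POp_\infty = \BOp^c(\DOp_\infty) = \colim_n\POp_n$ as operads in $\Op_0$.

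For cofibrancy, each $\POp_n$ is cofibrant by the cited Theorem~A. Provided the $\Sigma_*$-object maps induced by $\sigma^*$ are cofibrations --- which has to be read off the explicit construction of the $\DOp_n$ in~\cite{FresseEnKoszulDuality} --- the argument of~\cite[Lemma 1.4.11]{FresseCylinder} ensures that each transition $\POp_{n-1}\hookrightarrow\POp_n$ is an operad cofibration (it is a quasi-free extension by new generators). Since a transfinite composition of cofibrations is a cofibration, $\POp_\infty$ is cofibrant.

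For the $E_\infty$ property I would compute arity-wise homology by a colimit argument. Homology commutes with filtered colimits, and Theorem~A gives $H_*(\POp_n(r))\cong H_*(\EOp_n(r))$; the $\EOp_n$ are by construction an exhaustive filtration of the chain Barratt-Eccles operad $\EOp = N_*(E\Sigma_*)$, whose arity $r$ term is acyclic because $E\Sigma_r$ is contractible. One thus obtains
\begin{equation*}
H_*(\POp_\infty(r)) = \colim_n H_*(\EOp_n(r)) = H_*(\EOp(r)) = \kk
\end{equation*}
concentrated in degree~$0$, matching $H_*(\COp(r))$. Assembling the compatible morphisms $\POp_n\to\COp$ produced in~\S\ref{HomotopyGroups:BasePoints} --- normalized so that each sends the distinguished generator to $\mu$ --- into a single morphism $\POp_\infty\to\COp$ gives, by the above computation, a weak equivalence. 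The main obstacle is the input that the specific filtration $\EOp_1\subset\EOp_2\subset\cdots$ of the Barratt-Eccles operad is exhaustive and that the $\sigma^*$ are cofibrations of underlying $\Sigma_*$-objects; these are concrete combinatorial assertions about the construction of the $\EOp_n$ in~\cite{FresseEnKoszulDuality}, not formal consequences of the cobar formalism.
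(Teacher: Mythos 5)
The paper does not actually prove this statement: it is imported wholesale as Theorem~B of~\cite{FresseEnKoszulDuality}, so there is no internal argument to compare yours against. Your reduction --- commute $\BOp^c$ with the sequential colimit, deduce cofibrancy from cofibrancy of the transition maps together with closure of cofibrations under transfinite composition, and compute homology arity-wise through the colimit --- is the natural strategy, and the first two steps are sound; indeed the paper records immediately after the Fact that $\BOp^c$ preserves sequential colimits and that the morphisms $\sigma^*$ induce operad cofibrations (citing Proposition~1.3.6 of~\cite{FresseEnKoszulDuality}).

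The gap is in the homology step. You pass from $H_*(\POp_\infty(r)) = \colim_n H_*(\POp_n(r))$ to $\colim_n H_*(\EOp_n(r)) = H_*(\EOp(r)) = \kk$ using only the arity-wise isomorphisms $H_*(\POp_n(r)) \cong H_*(\EOp_n(r))$ supplied by Theorem~A. But a sequential colimit is not determined by its terms: you need the squares comparing the transition maps $\BOp^c(\sigma^*)_*$ on one side with the inclusion-induced maps $H_*(\EOp_{n-1}(r)) \to H_*(\EOp_n(r))$ on the other to commute. This is not formal. The morphisms $\sigma^*: \DOp_{n-1} \to \DOp_n$ are \emph{not} the duals of the inclusions $\EOp_{n-1}\hookrightarrow\EOp_n$ (dualization reverses the direction); they come from suspension morphisms, and the compatibility of the Theorem~A weak-equivalences $\BOp^c(\DOp_n)\xrightarrow{\sim}\EOp_n$ with these two systems of maps is essentially the substance of the cited Theorem~B. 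As written, your argument silently assumes the nontrivial part of the statement it is meant to establish; you flag the exhaustiveness of the filtration and the cofibration property as external inputs, but not this compatibility, which is the crucial one. A secondary, fixable point: to assemble the morphisms $\POp_n\to\COp$ into a morphism on $\POp_\infty$ you need strict, not merely up-to-homotopy, compatibility with $\BOp^c(\sigma^*)$; it is cleaner to construct $\phi:\BOp^c(\DOp_\infty)\to\COp$ directly by the same generators-and-twisting-equation recipe used for finite $n$.
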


The cobar construction
preserves sequential colimits.
Therefore
we also have $\POp_{\infty} = \colim_n \BOp^c(\DOp_n)$.
Moreover,
the morphisms $\sigma^*: \BOp^c(\DOp_{n-1})\rightarrow \BOp^c(\DOp_n)$
induced by $\sigma^*: \DOp_{n-1}\rightarrow\DOp_n$
are cofibrations of operads (see again~\cite[Proposition 1.3.6]{FresseEnKoszulDuality}).

In arity $r=2$,
the homology morphism $\sigma^*: H_*(\DOp_{n-1}(2))\rightarrow H_*(\DOp_n(2))$
satisfies $\sigma^*(\lambda_{n-2}^{\vee}) = \lambda_{n-1}^{\vee}$.
Hence,
the analysis of~\S\ref{HomotopyGroups:BasePoints}
implies that $\sigma^*$
induces a bijection
\begin{equation*}
\sigma_*: \underbrace{\pi_0(\Map_{\Op_0}(\BOp^c(\DOp_n),\COp))}_{= \kk}
\xrightarrow{\simeq}\underbrace{\pi_0(\Map_{\Op_0}(\BOp^c(\DOp_{n-1}),\COp))}_{= \kk}
\end{equation*}
and we can pass to the limit $n\rightarrow\infty$
in Lemma~\ref{HomotopyGroups:Abutment}
to conclude:

\begin{lemm}\label{HomotopyGroups:AbutmentEInfinityCase}
The result of Lemma~\ref{HomotopyGroups:Abutment}
also holds for $n=\infty$:
we have
\begin{equation*}
\pi_0(\Map_{\Op_0}(\BOp^c(\DOp_{\infty}),\COp)) = \kk
\quad\text{and}
\quad\pi_i(\Map_{\Op_0}(\BOp^c(\DOp_{\infty}),\COp),\phi) = *\quad\text{when $i>0$},
\end{equation*}
for every choice of morphism $\phi: \BOp^c(\DOp_{\infty})\rightarrow\COp$
as base point.\qed
\end{lemm}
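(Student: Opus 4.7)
The plan is to reduce the $n=\infty$ case to the finite-$n$ cases of Lemma~\ref{HomotopyGroups:Abutment} by realising $\Map_{\Op_0}(\BOp^c(\DOp_{\infty}),\COp)$ as the limit of an explicit tower of fibrations whose homotopy we already control.

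First I would record the identification $\BOp^c(\DOp_{\infty}) = \colim_n \BOp^c(\DOp_n)$, which rests on the preservation of sequential colimits by the cobar construction (noted just before the statement of the lemma) together with the observation that the transition morphisms $\sigma^*: \BOp^c(\DOp_{n-1})\to\BOp^c(\DOp_n)$ are operad cofibrations. Writing $X_n := \Map_{\Op_0}(\BOp^c(\DOp_n),\COp)$, applying $\Map_{\Op_0}(-,\COp)$ then produces
\begin{equation*}
\Map_{\Op_0}(\BOp^c(\DOp_{\infty}),\COp) = \lim_n X_n,
\end{equation*}
and the restriction maps $X_n\to X_{n-1}$ are fibrations of simplicial sets, since morphisms of mapping spaces induced by cofibrations on the source are fibrations. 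So the mapping space of interest is indeed presented as the limit of a tower of fibrations.

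Next I would feed this tower into the Milnor short exact sequence
\begin{equation*}
* \to {\lim_n}^1 \pi_{i+1}(X_n,\phi) \to \pi_i(\lim_n X_n,\phi) \to \lim_n \pi_i(X_n,\phi) \to *.
\end{equation*}
By Lemma~\ref{HomotopyGroups:Abutment}, for every choice of base point each layer $X_n$ satisfies $\pi_0(X_n) = \kk$ and $\pi_j(X_n,\phi) = *$ for $j > 0$; and the paragraph immediately preceding the statement has already verified that the transition morphisms induce bijections $\pi_0(X_n)\xrightarrow{\simeq}\pi_0(X_{n-1})$. The $\lim^1$ term therefore vanishes trivially, the $\lim$ term on the right computes to $\kk$ in degree $0$ and to $*$ in higher degrees, and the conclusion of the lemma follows at once.

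The only subtlety is the base-point clause: any vertex $\phi\in\Map_{\Op_0}(\BOp^c(\DOp_{\infty}),\COp)_0$ restricts to a coherent system of base points through the tower, at each stage of which Lemma~\ref{HomotopyGroups:Abutment} applies uniformly. Hence there is no genuine analytical obstacle here; the entire argument is a routine passage to the limit with all the hard work (the $E_1$-degeneration of \S\ref{HomotopySpectralSequence:SpectralSequence} and the base-point analysis of \S\ref{HomotopyGroups:BasePoints}) already done at finite stages.
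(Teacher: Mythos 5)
Your argument is correct and follows the paper's own route: the paper likewise identifies $\BOp^c(\DOp_{\infty})$ with $\colim_n\BOp^c(\DOp_n)$ along the operad cofibrations $\sigma^*$, notes that $\sigma^*$ induces bijections on $\pi_0$ of the mapping spaces via $\sigma^*(\lambda_{n-2}^{\vee}) = \lambda_{n-1}^{\vee}$, and then passes to the limit of the resulting tower of fibrations using Lemma~\ref{HomotopyGroups:Abutment}. Your only addition is to make the ``pass to the limit'' step explicit via the Milnor $\lim^1$ sequence, which is exactly the justification the paper leaves implicit.
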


This result, together with Fact~\ref{HomotopyGroups:LimitCobar}
gives the conclusion of Theorem~\ref{Result:MappingSpaces}
in the case $n=\infty$
and achieves the proof of this statement.\qed

\section*{Applications}

The homology of an $E_n$-operad~$\POp_n$ is identified, for $n>1$, with the $n$-Gerstenhaber operad~$\GOp_n$,
a composite $\GOp_n = \COp\circ\Lambda^{1-n}\LOp$ of the commutative operad~$\COp$
and of the $(n-1)$-fold operadic desuspension~$\Lambda^{1-n}$
of the Lie operad~$\LOp$ (this identity follows from~\cite{Cohen}, see for instance~\cite[\S 0.3]{FresseEnKoszulDuality}).
The unit morphisms of the operads $\COp$ and $\LOp$
induce obvious embeddings $\COp\hookrightarrow\GOp_n$
and $\Lambda^{1-n}\LOp\hookrightarrow\GOp_n$.
The $n$-Gerstenhaber operad~$\GOp_n$
comes also equipped with an augmentation $\GOp_n = \COp\circ\Lambda^{1-n}\LOp\rightarrow\COp$,
induced by an augmentation on the Lie operad,
and we have an operad embedding
$\Lambda\LOp\hookrightarrow\Lambda^n\COp\circ\Lambda\LOp = \Lambda^n\GOp_n$,
where we use the commutation of operadic suspensions
with composites to obtain the identity $\Lambda^n\COp\circ\Lambda\LOp = \Lambda^n(\COp\circ\Lambda^{1-n}\LOp) = \Lambda^n\GOp_n$.

Recall that the commutative operad~$\COp$
is generated as an operad by the operation~$\mu\in\COp(2)$
representing the structure product of commutative algebras.
The Lie operad~$\LOp$
is generated as an operad by the operation $\lambda\in\LOp(2)$
representing the structure bracket of Lie algebras.
The suspended operad $\Lambda^{1-n}\LOp(2)$
is generated by an operation $\lambda_{n-1}\in\Lambda^{1-n}\LOp(2)$
defined by the $n-1$-fold suspension of~$\lambda\in\LOp(2)$.
The $n$-Gerstenhaber operad $\GOp_n$
can be identified with the operad generated by the generating operations of the commutative operad $\mu\in\COp(2)$
and of the suspension of the Lie operad $\lambda_{n-1}\in\Lambda^{1-n}\LOp(2)$
together with an additional distribution relation between them.

The basis elements~$\mu,\lambda_{n-1}\in\EOp_n(2)$ used in the analysis of~\S\ref{HomotopyGroups:BasePoints}
just define representatives of these generating operations~$\mu\in\COp(2)$ and~$\lambda_{n-1}\in\Lambda^{1-n}\LOp(2)$
in $H_*(\EOp_n) = \GOp_n$.
For the cobar construction $\BOp^c(\DOp_n)$, $\DOp_n = \Lambda^{-n}\EOp_n^{\vee}$,
the existence of a weak-equivalence $\psi: \BOp^c(\DOp_n)\xrightarrow{\sim}\EOp_n$
imply that $H_*(\BOp^c(\DOp_n)) = H_*(\EOp_n) = \GOp_n$.
The cocycle $\mu^{\vee}\in\DOp_n(2)$ (respectively, $\lambda_{n-1}^{\vee}\in\DOp_n(2)$)
considered in the analysis of~\S\ref{HomotopyGroups:BasePoints}
defines a representative of the generating operation $\lambda_{n-1}\in\GOp_n(2)$ (respectively, $\mu\in\GOp_n(2)$)
in $H_*(\BOp^c(\DOp_n))$ (see~\cite[\S 0.3]{FresseEnKoszulDuality}).

Now,
all morphisms $\phi^0,\phi^1: \POp_n\rightarrow\COp$
which are homotopic in the category of operads
induce the same morphism in homology.
To see this,
use the equivalence between left and right-homotopies
for morphisms on a cofibrant operad,
use that path-objects of operads define path-objects of dg-modules -- because limits, weak-equivalences,
and fibrations of operads are created by forget of structure --
and hence that right-homotopies in the category of operads
define right-homotopies in the category of dg-modules.
The morphisms $\phi_{c}: \POp_n\rightarrow\COp$
of~\S\ref{HomotopyGroups:BasePoints},
which define a complete set of representatives of $\pi_0(\Map_{\Op_0}(\POp_n,\COp))$,
satisfy $\phi_{c}(\mu^{\vee}) = 0$
and $\phi_{c}(\lambda_{n-1}^{\vee}) = c\cdot\mu$.
From these observations,
we conclude that the homotopy class of a morphism $\phi: \POp_n\rightarrow\COp$
is fully determined by the associated homology morphism $\phi_*: H_*(\POp_n)\rightarrow\COp$.

The same argument line shows that the same conclusion holds in the case $n=1$,
where we have $\EOp_1 = \AOp$.
By passing to the limit $n\rightarrow\infty$,
we obtain that the homotopy class of a morphism $\phi: \POp_n\rightarrow\COp$
is determined by the associated homology morphism
for $n=\infty$ too.

\medskip
The next theorem gives an application of this analysis
in the particular case of the augmentation morphism $\GOp_n = \COp\circ\Lambda^{1-n}\LOp\rightarrow\COp$:

\begin{mainthm}\label{Result:MorphismToCommutativeHomotopy}
Let $\POp_n$ be any cofibrant $E_n$-operad ($n = 1,2,\dots,\infty$).
All morphisms $\phi^0,\phi^1: \POp_n\rightarrow\COp$
inducing the canonical augmentation $\GOp_n = \COp\circ\Lambda^{1-n}\LOp\rightarrow\COp$
in homology are right-homotopic in the category of operads.\qed
\end{mainthm}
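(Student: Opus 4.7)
The plan is to recognize Theorem~\ref{Result:MorphismToCommutativeHomotopy} as a near-immediate corollary of Theorem~\ref{Result:MappingSpaces} combined with the determination-by-homology principle established in the paragraphs just preceding its statement. No new construction is needed; the work is a careful bookkeeping of representatives.

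First I would identify the canonical augmentation $\GOp_n \to \COp$ with the homology of one of the explicit reference morphisms $\phi_{c}: \POp_n \to \COp$ of~\S\ref{HomotopyGroups:BasePoints}. Using the identifications recalled just above the theorem -- namely that $\lambda_{n-1}^{\vee}\in\DOp_n(2)$ represents $\mu\in\GOp_n(2)$ and $\mu^{\vee}\in\DOp_n(2)$ represents $\lambda_{n-1}\in\GOp_n(2)$ in $H_*(\BOp^c(\DOp_n))=\GOp_n$ -- the defining relations $\phi_{c}(\lambda_{n-1}^{\vee}) = c\cdot\mu$ and $\phi_{c}(\mu^{\vee}) = 0$ translate to $(\phi_{c})_*(\mu) = c\cdot\mu$ and $(\phi_{c})_*(\lambda_{n-1}) = 0$ on $H_*(\POp_n) = \GOp_n$. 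Since the canonical augmentation sends $\mu\mapsto\mu$ and $\lambda_{n-1}\mapsto 0$, it is realized by the particular choice $c = 1$, independently of the cofibrant model $\POp_n$ through the weak-equivalence $\POp_n \xrightarrow{\sim} \BOp^c(\DOp_n)$.

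Next I would invoke the determination-by-homology statement derived at the end of the preceding section: since the $\phi_{c}$, $c\in\kk$, exhaust $\pi_0(\Map_{\Op_0}(\POp_n,\COp)) = \kk$ and the homology map recovers $c$, two morphisms with the same induced map on homology represent the same class in $\pi_0(\Map_{\Op_0}(\POp_n,\COp))$. Applying this to the given $\phi^0,\phi^1$, both of which realize the augmentation in homology, yields that $\phi^0$ and $\phi^1$ lie in the same path-component of $\Map_{\Op_0}(\POp_n,\COp)$. Finally, since we took the mapping space to be defined via a simplicial frame $\COp^{\Delta^{\bullet}}$ on the target, a $1$-simplex in $\Map_{\Op_0}(\POp_n,\COp)$ is by definition a morphism $\POp_n \to \COp^{\Delta^1}$, i.e.\ a right-homotopy. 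Connectedness of $\phi^0$ and $\phi^1$ in this simplicial set then upgrades, using that $\POp_n$ is cofibrant and $\COp$ is fibrant and that right-homotopy is an equivalence relation in that setting, to a single right-homotopy between them.

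The argument has essentially no obstacle -- the $E_n$-structural input has been entirely absorbed into Theorem~\ref{Result:MappingSpaces} and the explicit description of base points in~\S\ref{HomotopyGroups:BasePoints}. The only point that requires care is the dualization bookkeeping matching $\lambda_{n-1}^{\vee}$ with $\mu$ and $\mu^{\vee}$ with $\lambda_{n-1}$; getting this backwards would suggest that the augmentation corresponds to $c=0$ rather than $c=1$ and make the statement look vacuous. Once this is pinned down, the passage $n\to\infty$ is automatic because the identification $\pi_0 = \kk$ is compatible with the maps $\sigma^*$, exactly as used in Lemma~\ref{HomotopyGroups:AbutmentEInfinityCase}.
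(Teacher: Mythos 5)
Your proposal is correct and follows essentially the same route as the paper: the theorem is deduced from Theorem~\ref{Result:MappingSpaces} by noting that the representatives $\phi_{c}$ exhaust $\pi_0(\Map_{\Op_0}(\POp_n,\COp))=\kk$, that homotopic morphisms induce the same homology morphism, and that the value of $c$ is read off from the induced map on $H_*(\POp_n)=\GOp_n$ via the identification of $\lambda_{n-1}^{\vee}$ with $\mu$ (and $\mu^{\vee}$ with $\lambda_{n-1}$). The dualization bookkeeping you flag as the delicate point is exactly the one the paper records, so nothing is missing.
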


The definition of morphisms $\phi: \BOp^c(\DOp_n)\rightarrow\COp$
satisfying this property in~\cite[\S 1]{FresseEnKoszulDuality}
gives the first step towards the proof that the cobar construction $\BOp^c(\Lambda^n\EOp_n^{\vee})$
is weakly-equivalent to $\EOp_n$
and as such defines a cofibrant replacement of~$\EOp_n$.
The new result of Theorem~\ref{Result:MorphismToCommutativeHomotopy}
implies that each of these morphisms $\phi: \BOp^c(\Lambda^{-n}\EOp_n^{\vee})\rightarrow\COp$
is uniquely determined up to homotopy.

By bar duality of operads (see~\cite{GetzlerJones}),
the existence of a morphism $\phi: \BOp^c(\Lambda^{-n}\EOp_n^{\vee})\rightarrow\COp$, for $n<\infty$,
amounts to the existence of a morphism $\phi^{\sharp}: \Lambda\BOp^c(\Lambda^{-1}\COp^{\vee})\rightarrow\Lambda^n\EOp_n$.
The cobar dual of the desuspension of the commutative cooperad
$\LOp_{\infty} = \BOp^c(\Lambda^{-1}\COp^{\vee})$
is a standard instance of an $L_\infty$-operad,
an operad weakly-equivalent to the operad of Lie algebras $\LOp$.
As such,
this operad satisfies $H_*(\LOp_{\infty}) = \LOp$.

\medskip
Theorem~\ref{Result:MorphismToCommutativeHomotopy}
implies:

\begin{mainthm}\label{Result:MorphismFromLieHomotopy}
Let $\EOp_n$ be an $E_n$-operad which is cofibrant as a $\Sigma_*$-object (but not necessarily cofibrant as an operad)
and so that each dg-module $\EOp_n(r)$ is bounded and finitely generated,
for all $r\in\NN$.
All morphisms $\phi_0,\phi_1: \Lambda\LOp_{\infty}\rightarrow\Lambda^n\EOp_n$
inducing the canonical embedding $\Lambda\LOp\hookrightarrow\Lambda^n\COp\circ\Lambda\LOp = \Lambda^n\GOp_n$
in homology are left-homotopic in the category of operads.
\end{mainthm}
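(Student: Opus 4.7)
The plan is to reduce Theorem~\ref{Result:MorphismFromLieHomotopy} to Theorem~\ref{Result:MorphismToCommutativeHomotopy} via the Getzler-Jones bar duality already invoked in the paragraph preceding the statement. The standing hypotheses on $\EOp_n$---termwise finite-generation and cofibrancy as a $\Sigma_*$-object---yield the biduality identification $\EOp_n^{\vee\vee} = \EOp_n$, under which bar duality produces a natural bijection $\phi\mapsto\phi^{\flat}$ between operad morphisms $\phi: \Lambda\LOp_\infty = \Lambda\BOp^c(\Lambda^{-1}\COp^{\vee}) \to \Lambda^n\EOp_n$ and operad morphisms $\phi^{\flat}: \BOp^c(\Lambda^{-n}\EOp_n^{\vee}) \to \COp$ (the inverse of the correspondence $\phi\mapsto\phi^{\sharp}$ of the preamble). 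Both sides parametrize a common set of twisting cochains, with the operadic suspensions matching up through the identity $\Lambda^n\COp \circ \Lambda\LOp = \Lambda^n\GOp_n$ recalled before the theorem.

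Next I would verify that the homological hypothesis transforms correctly. By the explicit description of generators in~\S\ref{HomotopyGroups:BasePoints}, the cocycle $\lambda_{n-1}^{\vee} \in \DOp_n(2)$, whose image at $\mu \in \COp(2)$ realizes the canonical augmentation $\GOp_n \to \COp$ on homology, is dual to the class $\lambda_{n-1} \in H_*(\EOp_n(2))$ representing the generator of the sub-operad $\Lambda\LOp \hookrightarrow \Lambda^n\GOp_n$. Hence the hypothesis that $\phi_0, \phi_1$ induce the canonical embedding $\Lambda\LOp \hookrightarrow \Lambda^n\GOp_n$ on homology is equivalent to $\phi_0^{\flat}, \phi_1^{\flat}$ inducing the canonical augmentation $\GOp_n \to \COp$ on homology, so Theorem~\ref{Result:MorphismToCommutativeHomotopy} supplies a right-homotopy between $\phi_0^{\flat}$ and $\phi_1^{\flat}$ in $\Op_0$.

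The last step is to transport this right-homotopy into a left-homotopy between $\phi_0$ and $\phi_1$. Since $\BOp^c(\Lambda^{-n}\EOp_n^{\vee})$ is cofibrant with fibrant target $\COp$, and dually $\Lambda\LOp_\infty$ is quasi-free on a cofibrant $\Sigma_*$-object with fibrant target $\Lambda^n\EOp_n$, left- and right-homotopies coincide on both sides. The main obstacle is extending the bar-duality bijection from morphisms to $\pi_0$ of mapping spaces; the cleanest fix is to observe that the twisting-cochain description is natural with respect to the simplicial framing of the target, so that it automatically descends to homotopy classes of morphisms. A self-contained alternative would be to bypass bar duality entirely and rerun the mapping-space decomposition of Proposition~\ref{HomotopySpectralSequence:MappingSpaceDecomposition} directly on $\Map_{\Op_0}(\Lambda\LOp_\infty, \Lambda^n\EOp_n)$: the quasi-free filtration of $\Lambda\LOp_\infty$ together with the finite-type hypothesis on $\EOp_n$ reduces the $E_1$-page to the same Cohen-style computation as in Lemma~\ref{HomotopyGroups:SpectralSequence}, producing the analogous collapse at $E_1$ and the analogous analysis of base points as in~\S\ref{HomotopyGroups:BasePoints}.
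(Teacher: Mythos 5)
Your overall strategy coincides with the paper's: reduce to Theorem~\ref{Result:MorphismToCommutativeHomotopy} by bar duality. But there is a genuine gap exactly at the step you flag as ``the main obstacle,'' and the fix you propose does not close it. You assert that $\Lambda\LOp_{\infty}$ is ``quasi-free on a cofibrant $\Sigma_*$-object'' and that left- and right-homotopies therefore coincide on that side. This is false: $\LOp_{\infty} = \BOp^c(\Lambda^{-1}\COp^{\vee})$ is quasi-free on (a suspension of) $\COp^{\vee}$, whose components are trivial $\Sigma_r$-representations of rank one, which are not cofibrant $\Sigma_*$-objects in positive characteristic; the paper explicitly remarks right after the proof that $\LOp_{\infty}$ is not cofibrant unless $\QQ\subset\kk$. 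For a non-cofibrant source one cannot convert a right-homotopy (or an identification in the homotopy category) into a left-homotopy, and the theorem specifically asserts a left-homotopy. For the same reason your ``self-contained alternative'' of rerunning the tower of Proposition~\ref{HomotopySpectralSequence:MappingSpaceDecomposition} on $\Map_{\Op_0}(\Lambda\LOp_{\infty},\Lambda^n\EOp_n)$ breaks down: the skeletal inclusions are only operad cofibrations when the generating $\Sigma_*$-object is cofibrant, so the tower need not consist of fibrations and its $\pi_0$ need not compute left-homotopy classes.

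The missing idea, which is the entire content of the paper's short proof, is to apply bar duality not merely to morphisms but to a homotopy: one takes the Berger--Moerdijk path object $\COp^{\Delta^1}$ of the commutative operad, whose components are bounded and finitely generated, so that its termwise dual is a cooperad and the cobar construction $\BOp^c((\COp^{\Delta^1})^{\vee})$ makes sense; this operad is then an explicit cylinder object $\Lambda\LOp_{\infty}\otimes\Delta^1$ for $\Lambda\LOp_{\infty}$, and bar duality gives a bijection between right-homotopies $\phi^{01}: \BOp^c(\Lambda^{-n}\EOp_n^{\vee})\rightarrow\COp^{\Delta^1}$ and left-homotopies $\phi_{01}: \BOp^c((\COp^{\Delta^1})^{\vee})\rightarrow\Lambda^n\EOp_n$. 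The right-homotopy supplied by Theorem~\ref{Result:MorphismToCommutativeHomotopy} then translates directly into the desired left-homotopy, with no appeal to cofibrancy of the source. Your remark about naturality of the twisting-cochain description ``with respect to the simplicial framing of the target'' gestures at this, but as stated it would only descend the bijection to homotopy classes, which is weaker than producing a left-homotopy from a non-cofibrant source; the finite-type path object and the identification of its cobar dual as a cylinder object are the points that need to be supplied.
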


\begin{proof}
The construction of~\cite{BergerMoerdijk}
gives a path-object $\COp^{\Delta^1}$
naturally associated to the commutative operad $\COp$
such that each component of~$\COp^{\Delta^1}$
is a bounded and finitely generated dg-module.
Thus
we can apply the bar duality to this path object
to obtain a cylinder object $\Lambda\LOp_{\infty}\otimes\Delta^1 = \BOp^c((\COp^{\Delta^1})^{\vee})$
associated to $\LOp_{\infty}$.
Under the assumption of the theorem,
the bar duality gives a bijective correspondence between left-homotopies $\phi_{0 1}: \BOp^c((\COp^{\Delta^1})^{\vee})\rightarrow\Lambda^n\EOp_n$
and right-homotopies $\phi^{0 1}: \BOp^c(\Lambda^{-n}\EOp_n^{\vee})\rightarrow\COp^{\Delta^1}$.
Therefore the result of Theorem~\ref{Result:MorphismToCommutativeHomotopy}
implies the assertion of Theorem~\ref{Result:MorphismFromLieHomotopy}.
\end{proof}

Note that the operad $\LOp_{\infty} = \BOp^c(\Lambda^{-1}\COp^{\vee})$
is not cofibrant unless $\QOp\subset\kk$.
The finiteness assumptions can be avoided if we accept to take a cofibrant replacement of $\LOp_{\infty}$
when $\QQ\not\subset\kk$ (this observation follows from standard arguments of homotopical algebra).

\medskip
In the characteristic zero context,
the existence of morphisms $\phi^{\sharp}: \Lambda\LOp_{\infty}\rightarrow\Lambda^n\EOp_n$
is established in~\cite{KontsevichMotives},
for all $n<\infty$.
The motivation of~\cite{KontsevichMotives}
for this construction
is to deduce the definition of deformation complexes
from $E_n$-algebra structures.
The new result of Theorem~\ref{Result:MorphismFromLieHomotopy}
implies that each of the morphisms $\phi^{\sharp}: \Lambda\LOp_{\infty}\rightarrow\Lambda^n\EOp_n$
is uniquely determined up to homotopy.

\end{document}